\theoremstyle{definition}
\newtheorem{theorem}{Theorem}[section]
\newtheorem{lemma}[theorem]{Lemma}
\newtheorem{proposition}[theorem]{Proposition}
\newtheorem{corollary}[theorem]{Corollary}
\theoremstyle{definition}
\newtheorem{assumption}[theorem]{Assumption}
\theoremstyle{remark}
\newtheorem{remark}[theorem]{Remark}
\newtheorem{example}[theorem]{Example}
\DeclareMathOperator{\prox}{\mathbf{prox}}
\DeclareMathOperator{\Hilbert}{\mathcal{H}}
\DeclareMathOperator{\relint}{ri}
\DeclareMathOperator{\dist}{dist}
\DeclareMathOperator{\dom}{dom}
\DeclareMathOperator{\range}{range}
\DeclareMathOperator{\conv}{conv}
\DeclareMathOperator*{\argmax}{argmax}
\DeclareMathOperator*{\argmin}{argmin}
\newcommand\Prob[3]{%
    \textbf{w.p.} #1 \textbf{do}%
    \algocf@group{#2}%
    \textbf{Otherwise}%
    \algocf@group{#3}%
}
\title{Identification of Active Subfunctions in Finite-Max Minimisation via a Smooth Reformulation}
\author[1]{Charl J. Ras}
\author[1]{Matthew K. Tam}
\author[1]{Daniel J. Uteda}
\affil[1]{School of Mathematics \& Statistics, The University of Melbourne, Parkville VIC 3010, Australia.}
\date{\today}
\begin{document}
\maketitle

\begin{abstract}
In this work, we consider a nonsmooth minimisation problem in which the objective function can be represented as the maximum of finitely many smooth ``subfunctions''. First, we study a smooth min-max reformulation of the problem. Due to this smoothness, the model provides enhanced capability of exploiting the structure of the problem, when compared to methods that attempt to tackle the nonsmooth problem directly. Then, we present several approaches to identify the set of active subfunctions at a minimiser, all within finitely many iterations of a first order method for solving the smooth model. As is well known, the problem can be equivalently rewritten in terms of these subfunctions, but a key challenge is to identify this set \textit{a priori}. Such an identification is clearly beneficial in an algorithmic sense, since one can apply this knowledge to create an equivalent problem with lower complexity, thus facilitating generally faster convergence. Finally, numerical results comparing the accuracy of each of these approaches are presented, along with the effect they have on reducing the complexity of the original problem.

\end{abstract}

\providecommand{\keywords}[1]
{
  \small	
  \noindent \textbf{Keywords.} #1
}
\keywords{finite max functions $\cdot$ support identification $\cdot$ active manifolds $\cdot$ min-max problems}
\paragraph{MSC2020.} 49K35 $\cdot$ 65B99 $\cdot$ 65K15 $\cdot$ 65Y20 $\cdot$ 90C25 $\cdot$ 90C33 $\cdot$ 90C47

\sethlcolor{cyan}
\section{Introduction}
Let $\Hilbert$ be a real and finite dimensional Hilbert space. We consider minimisation problems of the form
\begin{equation}\label{eq:minmax}
    \min_{x\in\Hilbert} f(x),\text{~where~} f(x) := \max_{i\in I}\{f_i(x)\}
\end{equation}
Here $I := \{1,\dots,N\}$ denotes a finite index set, and the subfunctions $f_i\colon\Hilbert\to\mathbb{R}$ are assumed convex and differentiable with locally Lipschitz gradients. Problems of the form~\eqref{eq:minmax} have various applications, including in neural networks~\cite{deodhare1996synthesis, manevitz1996approx, ZAMIR2015947}, signal processing~\cite{liu2011maxmin, yamamoto2003optimal, nagahara2011MinMax}, optimal recovery~\cite{binev2017data, cohen2020optimal}, and facility location~\cite{elzinga1976minimax, elzinga1972geometrical, elzinga1972minimum, love1973multi}. In this work, we are focused on first order methods which can exploit the underlying structure of~\eqref{eq:minmax}, that is, via direct computation of $\nabla f_i$ for certain subfunctions $f_i$.

\paragraph{Approaches to solving \eqref{eq:minmax}.} A common first-order method for solving~\eqref{eq:minmax} is the \textit{subgradient method}, a generalisation of gradient descent to (potentially) nonsmooth problems.  Given an initial point $x^0\in\Hilbert$ and a step-size sequence $(\lambda_k)\subset\mathbb{R}_{++}$, this method takes the form
 \begin{equation}\label{eq:subgrad}
 x^{k+1} = x^k-\lambda_kg^k,  \quad\forall k\in\mathbb{N}.
 \end{equation}
where $g^k$ denotes any \emph{subgradient} of $f$ at $x^k$. In the setting of \eqref{eq:minmax}, the \emph{subdifferential} (\emph{i.e.,}~the set of subgradients) at a point is given by the convex hull of the gradients of the active subfunctions at said point, and hence \eqref{eq:subgrad} can be implemented with relative ease. Despite this, the subgradient method suffers from a number of unfavourable properties which can impact its numerical performance: the step-size sequence is typically required to decay to zero leading to slow convergence \cite[Chapter 2]{Shor1985}, and the chosen subgradients need not be descent directions (see \cite[Chapter 7]{ruszczynski2006nonlinear} or \cite[Chapter 2]{Shor1985}).

Another first-order method for solving \eqref{eq:minmax} involves applying the \textit{Primal-Dual Hybrid Gradient (PDHG)} algorithm~\cite{chambolle2016introduction, condat2023randprox} to the epigraphical formulation of~\eqref{eq:minmax}, as was done in \cite{cohen2020optimal}. In this approach, the nonsmooth objective function is converted to a system of $N$ smooth constraints through the addition of an auxiliary variable. These constraints manifest themselves in the PDHG iteration through their nearest point projectors. However, since the projection onto epigraphical constraints do not typically admit a closed form, in general, computing the projections requires solving an additional smooth subproblem.

Other frameworks for solving \eqref{eq:minmax} include those based on smoothing~\cite{Polak2003, polak2003algorithms, xingsi1992entropy, xu2001smoothing}, cutting planes~\cite{gaudioso2006incremental}, and $\mathcal{VU}$-decompositions \cite{mifflin2005VU, hare2020derivative}. However, the implementations of many of these methods are not straightforward, and typically involve solving one or more potentially difficult subproblems at each iteration.

\paragraph{Identification of active subfunctions.} 

One factor influencing the complexity of \eqref{eq:minmax} is the size $N$ of the index set $I$. Indeed, the objective is smooth when $N=1$, and any proper, convex, and lower semicontinuous (lsc) objective can be represented as the maximum of arbitrarily many subfunctions.
To solve~\eqref{eq:minmax}, it might therefore be advantageous to replace the index set $I$ with a subset thereof, to form a reduced problem with the same minimisers as in~\eqref{eq:minmax}, and solve this (potentially simpler) reduced problem instead. Within this paradigm, a natural choice for this subset would be the subset of those subfunctions which are active at a solution of \eqref{eq:minmax}, however this is usually not known \emph{a priori}. 

Identification of active subfunctions is closely related to \emph{identification of active constraints}. In the context of constrained minimisation, the latter refers to determining which inequality constraints are satisfied with equality at a solution. When the objective function is continuously differentiable and the constraints are linear,  Burke and Mor{\'e} \cite{burke1988identification} showed that the active constraints can be identified using the sequences generated by certain iterative algorithms. More precisely, if a sequence (\emph{e.g.,} generated by projected gradient descent) converges to a stationary point of the problem, it was shown that the sequence and the stationary point are eventually contained within the relative interior of the same face of the polyhedron defined by the linear constraints. This result was later extended to convex non-linear constraints by Wright~\cite{wright1993identifiable}.

Using the notion of \emph{partially smooth functions}, Hare and Lewis~\cite{hare2004identifying} established a general result concerning identification of function activity, relative to certain manifolds, by a sequence of subgradients which converges to zero. Although this framework is able to model finite-max functions~\cite[Example~2.5]{hare2004identifying}, the conditions required to apply the identification result \cite[Theorem 5.3]{hare2004identifying} to the minimisation problem \eqref{eq:minmax} are restrictive. In particular, partial smoothness of $f$ at a solution $x^*$ of \eqref{eq:minmax} requires linear independence of $S:=\{\nabla f_i(x^*)\colon f(x^*)=f_i(x)\}$ which conflicts with the first order optimally condition of zero being contained in the convex hull of $S$. A further difficulty in applying this type of result is that, for first order methods, it is generally not possible to guarantee that subgradients generated by the method converge to zero.

A different approach to constraint identification was studied in \cite{facchinei1998accurate, oberlin2006active}. Rather than considering the original minimisation problem directly, constraint identification was examined in the context of the saddle point problem associated with the Lagrangian function. Within this framework, the identification of active constraints is made possible through the general notion known as \textit{identification functions}. In contrast to \cite{burke1988identification, wright1993identifiable}, this approach is more flexible in terms of the method applied to generate sequences, and doesn't require additional assumptions such as nondegeneracy.

\paragraph{Our Contributions.} 
In this work, we consider an approach to solving~\eqref{eq:minmax} that can exploit the differentiability of the subfunctions in~\eqref{eq:minmax}, and we discuss the finite time identification results which can be derived within this model. More precisely, we consider an approach based on reformulating the nonsmooth minimisation problem~\eqref{eq:minmax} as the smooth saddle point problem given by
\begin{equation}\label{eq:saddle}
\min_{x\in\Hilbert}\max_{y\in\Delta^N}\sum_{i\in I} y_i f_i(x)~=: \phi(x,y),
\end{equation}
where $\Delta^N=\left\{y\in\mathbb{R}^N_+\colon\sum_{i=1}^N y_i=1\right\}$ denotes the probability simplex. This formulation has two desirable properties: (i) its objective function is differentiable whenever the subfunctions in \eqref{eq:minmax} are differentiable, and (ii) its constraint set $\Delta^N$ is instance independent, and the projection is readily computable in $\mathcal{O}(N\log N)$ time \cite{condat2016fast}. In contrast, the complexity of the constraint set in the epigraphicial formulation depends on the problem instance. 

We then investigate several approaches to identify the set of active subfunctions at a minimiser. Part of these results are obtained by directly studying the problems~\eqref{eq:minmax} and~\eqref{eq:saddle}, as opposed to the aforementioned existing works which study more general nonsmooth functions (\emph{eg}, \cite{hare2004identifying, hare2007identifying}), and those that study constraint identification for a general constrained minimisation problem (\emph{eg}, \cite{burke1988identification, facchinei1998accurate, oberlin2006active, wright1993identifiable}). The remainder are based on the original work of \cite{facchinei1998accurate} for active constraint identification.

The rest of this paper is organised as follows. In Section~\ref{sec:prelim}, we recall various definitions and discuss KKT theory. In Section~\ref{sec:saddle}, we discuss the smooth model~\eqref{eq:saddle}, including a proof of equivalence in Theorem~\ref{thm:reform}. Section~\ref{sec:identification} contains results on finite time support identification, the main results being Theorem~\ref{thm:subgradient} and Corollary~\ref{cor:eps-subdiff}.  Finally, in Section~\ref{sec:exp}, we compare the accuracy of the results from Section~\ref{sec:identification}, and demonstrate how they can be applied to improve computational performance.

\section{Preliminaries}\label{sec:prelim}

\paragraph{Notation \& Definitions.} Throughout, $\Hilbert$ denotes a real finite dimensional Hilbert space equipped with inner product $\langle\cdot,\cdot\rangle$ and induced norm $\|\cdot\|$. The \textit{subdifferential} of a convex function $f\colon\Hilbert\to(-\infty,+\infty]$ for $x_0\in\dom f:= \{x\in\Hilbert\colon f(x)<+\infty\}$ is defined as
$$\partial f(x_0) = \{\xi\in\Hilbert\colon\langle f(x) - f(x_0) - \langle\xi,x-x_0\rangle\geq 0~\forall x\in\Hilbert\},$$
and $\partial f(x_0) = \emptyset$ for $x_0\notin\dom f$. Its elements are called \textit{subgradients}. In particular, the following properties of $\partial f$ will be particularly useful in this paper.
\begin{lemma}\label{lem:subdiff}
    \begin{enumerate}[(a)]
        \item\label{item:opt} \cite[Theorem 16.3]{Bauschke2017} For $f\colon\Hilbert\to(-\infty,+\infty]$ proper, $x^*\in\argmin f$ if and only if $0\in\partial f(x^*)$.
        \item\label{item:max-rule} \cite[Theorem 18.5]{Bauschke2017} If $f=\max\{f_1,\dots,f_N\}$ as defined in~\eqref{eq:minmax}, then for any $x_0\in\Hilbert$,
        $$\partial f(x_0) = \conv\{\nabla f_i(x_0)~|~i\in I(x_0)\},$$
        where $I(x_0) := \{i\in I\colon f_i(x_0) = f(x_0)\}$ is the set of active subfunctions.
    \end{enumerate}
\end{lemma}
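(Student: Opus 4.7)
The plan is to handle the two parts separately. Part~(a) is essentially Fermat's rule unpacked from the definition: if $x^*\in\argmin f$, then $f(x)-f(x^*)\geq 0 = \langle 0, x-x^*\rangle$ for every $x\in\Hilbert$, so $0$ satisfies the subgradient inequality; conversely, $0\in\partial f(x^*)$ means $f(x)\geq f(x^*)+\langle 0,x-x^*\rangle = f(x^*)$ for every $x$, which is the definition of a minimiser. No separate machinery is needed beyond the definition of $\partial f$.

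For part~(b), I would prove both inclusions of the equality. The $(\supseteq)$ direction is straightforward: if $i\in I(x_0)$, then for every $x\in\Hilbert$, convexity and differentiability of $f_i$ give $f(x)\geq f_i(x)\geq f_i(x_0)+\langle \nabla f_i(x_0), x-x_0\rangle = f(x_0)+\langle \nabla f_i(x_0), x-x_0\rangle$, where the final equality uses $i\in I(x_0)$. Hence $\nabla f_i(x_0)\in\partial f(x_0)$ for each active $i$, and since $\partial f(x_0)$ is convex, it contains the convex hull.

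The $(\subseteq)$ direction is the step I expect to be the main obstacle. I would argue by contradiction: suppose $\xi\in\partial f(x_0)$ but $\xi\notin C$, where $C:=\conv\{\nabla f_i(x_0):i\in I(x_0)\}$ is compact and convex (a convex hull of finitely many points). By the strict separation theorem, there exists a direction $d\in\Hilbert$ with $\langle\xi,d\rangle > \max_{i\in I(x_0)}\langle\nabla f_i(x_0),d\rangle$. I would then compute the one-sided directional derivative $f'(x_0;d)$: by continuity of each $f_j$ at $x_0$, any inactive index $j\notin I(x_0)$ satisfies $f_j<f(x_0)$ in a neighbourhood of $x_0$, so for all sufficiently small $t>0$, $f(x_0+td) = \max_{i\in I(x_0)}f_i(x_0+td)$. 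Taking the limit along $t\downarrow 0$ and using smoothness of the active $f_i$ yields $f'(x_0;d) = \max_{i\in I(x_0)}\langle\nabla f_i(x_0),d\rangle$. Finally, the general inequality $\langle\xi,d\rangle\leq f'(x_0;d)$, valid for any $\xi\in\partial f(x_0)$, contradicts the separating inequality. The key technical point is the neighbourhood argument that lets us ignore inactive subfunctions when computing $f'(x_0;d)$.
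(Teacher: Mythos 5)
The paper does not prove this lemma at all: both parts are quoted directly from Bauschke--Combettes (Theorems 16.3 and 18.5), so there is no in-paper argument to compare against. Your proposal is a correct, self-contained proof along the classical lines. Part (a) is exactly Fermat's rule read off the definition. For part (b), the $(\supseteq)$ inclusion via the gradient inequality for each active $f_i$ plus convexity of $\partial f(x_0)$ is right, and the $(\subseteq)$ inclusion via strict separation of $\xi$ from the compact convex set $C$ together with the identity $f'(x_0;d)=\max_{i\in I(x_0)}\langle\nabla f_i(x_0),d\rangle$ is the standard Danskin-type argument; the neighbourhood step that discards inactive indices (since $f_j(x_0)<f(x_0)$ persists under continuity, so $f=\max_{i\in I(x_0)}f_i$ locally) is exactly the point that needs care, and you handle it correctly, as you do the inequality $\langle\xi,d\rangle\leq f'(x_0;d)$ obtained from the monotone difference quotient. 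What your route buys over the paper's citation is self-containedness and the explicit finite-dimensional separation argument; what the citation buys is that Bauschke--Combettes prove the max rule in greater generality (for nonsmooth convex subfunctions, with $\nabla f_i(x_0)$ replaced by $\partial f_i(x_0)$), of which the smooth case used here is a specialisation.
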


 Similarly to $\partial f$, the $\varepsilon$-\textit{subdifferential} for $x_0\in\dom f$ and given $\varepsilon>0$ is defined as 
$$\partial_\varepsilon f(x_0) = \{\xi\in\Hilbert\colon\langle f(x) - f(x_0) - \langle\xi,x-x_0\rangle+\varepsilon\geq 0~\forall x\in\Hilbert\},$$
and $\partial_{\varepsilon} f(x_0) = \emptyset$ for $x_0\notin\dom f$. Its elements are called $\varepsilon$-\textit{subgradients}. In regards to the $\varepsilon$-subdifferential, we will make use of the following result.
\begin{theorem}[Br{\o}ndsted--Rockafellar Theorem {\cite[Theorem 4.3.2]{borwein2010convex}}]\label{thm:BR}
    Let $f\colon\Hilbert\to(-\infty,+\infty]$ be a proper, convex, and lsc function. For any $\varepsilon,\delta\in\mathbb{R}_+, x\in\dom f$, and $\xi\in\partial_\varepsilon f(x)$, there exists $u\in\dom f$ and $\nu\in\partial f(u)$ such that
    $$\|u-x\|\leq \frac{\varepsilon}{\delta},\text{~and~}\|\nu-\xi\|\leq \delta.$$
\end{theorem}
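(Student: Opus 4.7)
The plan is to deduce the statement from Ekeland's variational principle together with the Moreau--Rockafellar sum rule. To set things up cleanly, I would first reduce to the case $\xi = 0$ by passing to the shifted function $g\colon\Hilbert\to(-\infty,+\infty]$ defined by $g(y):=f(y)-\langle\xi,y\rangle$. Then $g$ is proper, convex, and lsc, and a direct computation gives $\partial_\varepsilon g(y)=\partial_\varepsilon f(y)-\xi$ and $\partial g(y)=\partial f(y)-\xi$ for every $y$. The hypothesis $\xi\in\partial_\varepsilon f(x)$ therefore becomes $0\in\partial_\varepsilon g(x)$, which unfolds to $g(x)\leq \inf_\Hilbert g + \varepsilon$; in particular $\inf g>-\infty$, so $g$ is bounded below.

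Next, I would invoke Ekeland's variational principle for the proper, lsc, bounded-below function $g$ at the $\varepsilon$-minimiser $x$, with radius parameter $\lambda:=\varepsilon/\delta$. This yields a point $u\in\dom g=\dom f$ satisfying
$$\|u-x\|\leq \frac{\varepsilon}{\delta}, \qquad g(u)\leq g(y)+\delta\|y-u\| \quad\text{for all } y\in\Hilbert,$$
so that $u$ is a global minimiser of the auxiliary function $h(y):=g(y)+\delta\|y-u\|$. Since $h$ is proper, convex, and lsc, Lemma~\ref{lem:subdiff}(a) gives $0\in\partial h(u)$. As the perturbation $y\mapsto\delta\|y-u\|$ is continuous and finite-valued on all of $\Hilbert$, the Moreau--Rockafellar sum rule applies without any constraint qualification, yielding
$$\partial h(u)=\partial g(u)+\delta\,\partial\|\cdot\|(0)=\partial g(u)+\delta B,$$
where $B$ denotes the closed unit ball of $\Hilbert$. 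Therefore there exists $\eta\in\partial g(u)$ with $\|\eta\|\leq\delta$. Setting $\nu:=\eta+\xi$ gives $\nu\in\partial f(u)$ with $\|\nu-\xi\|=\|\eta\|\leq\delta$, completing the proof.

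The main technical input, and the step I would expect to be the principal obstacle if proving from scratch, is Ekeland's variational principle itself: the estimate $\|u-x\|\leq \varepsilon/\delta$ and the perturbed minimisation identity arise simultaneously only through Ekeland's recursive construction. Once Ekeland is in hand, the remainder is essentially bookkeeping: shifting by $\xi$, applying the sum rule, and reading off the bounds. The hypotheses needed for Ekeland (properness, lower semicontinuity, bounded below) are supplied by the $\varepsilon$-subdifferential assumption after passing to $g$, and degenerate cases such as $\varepsilon=0$ (where one takes $u=x$ and $\nu=\xi$) require no separate argument.
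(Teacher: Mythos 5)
The paper does not prove this result---it is quoted as a known theorem with the citation to Borwein--Vanderwerff, so there is no in-paper proof to compare against. Your argument is correct and is the standard derivation (essentially the one in the cited reference): the reduction to $\xi=0$ via $g=f-\langle\xi,\cdot\rangle$, Ekeland's variational principle with radius $\varepsilon/\delta$, and the Moreau--Rockafellar sum rule applied to $g+\delta\|\cdot-u\|$ (valid without qualification since the norm term is finite and continuous) all go through, and completeness of $\Hilbert$ supplies the hypothesis Ekeland needs. The only cosmetic remark is that your closing sentence both supplies and disclaims a separate argument for $\varepsilon=0$; since Ekeland's principle is usually stated for $\varepsilon>0$, it is cleanest to simply note that $\varepsilon=0$ means $\xi\in\partial f(x)$ and take $u=x$, $\nu=\xi$.
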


The \textit{indicator function} $\iota_C\colon\Hilbert\to(-\infty,+\infty]$ of a set $C\subseteq\Hilbert$ takes the value $0$ for $x\in C$ and $+\infty$ otherwise. The \textit{normal cone} of a set $C$ at a point $x_0\in C$ is the subdifferential of its indicator function and is denoted $N_C(x_0) = \{\xi\in\Hilbert\colon\langle\xi,x-x_0\rangle\geq 0~\forall x\in C\}$. We will make use of the point-to-set distance $d(x,C):=\inf_{u\in C}\|x-u\|$, and $\relint C$ for the \textit{relative interior} of the set $C\subseteq\Hilbert$.

We denote $e=(1,\dots,1)^\top\in\mathbb{R}^n$, where the dimension will be clear from context. Then $\Delta^n=\left\{y\in\mathbb{R}^n_+\colon \langle e,y\rangle = 1\right\}$ is called the \textit{unit/probability simplex}, and the convex hull of a set of vectors $\{v_1,\dots,v_n\}$ is defined as 
$$\conv\{v_1,\dots,v_n\} := \left\{\sum_{i=1}^n\alpha_i v_i~|~ \alpha\in\Delta^n\right\}.$$
In particular, we will apply following results.

\begin{lemma}[\protect{\cite[Exercise 3.1]{Broendsted1983}}]\label{lem:rintCH}
For $K$ = $\conv\{v_1, \dots, v_n\}$, we have $x\in\relint K$ if and only if there exists $\alpha\in\Delta^n\cap\mathbb{R}_{++}$ such that $x = \sum_{i=1}^n \alpha_i v_i$.
\end{lemma}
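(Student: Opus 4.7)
The plan is to prove both implications of the equivalence separately, using the definition of $\relint K$ in terms of extendability of line segments inside $K$ through $x$.

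For the ``if'' direction, suppose $x=\sum_{i=1}^n\alpha_iv_i$ with $\alpha\in\Delta^n\cap\mathbb{R}_{++}$. I would show that for every tangent direction $d$ in the linear subspace parallel to $\mathrm{aff}\,K=\mathrm{aff}\{v_1,\dots,v_n\}$, one has $x+td\in K$ for all sufficiently small $|t|$. Any such $d$ is a difference of two affine combinations of the $v_i$, hence $d=\sum_{i=1}^n\beta_iv_i$ with $\sum_{i=1}^n\beta_i=0$. Then $x+td=\sum_{i=1}^n(\alpha_i+t\beta_i)v_i$, and since $\min_i\alpha_i>0$, choosing $|t|<\min_i\alpha_i/(1+\max_i|\beta_i|)$ keeps every coefficient positive while preserving the sum equal to $1$. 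This gives a neighbourhood of $x$ in $\mathrm{aff}\,K$ contained in $K$, i.e.\ $x\in\relint K$.

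For the ``only if'' direction, the idea is to exploit the defining property of the relative interior: for each $j\in\{1,\dots,n\}$, since $v_j\in K$ and $x\in\relint K$, the segment from $v_j$ through $x$ can be extended strictly beyond $x$ while remaining in $K$. Thus there exist $s_j\in(0,1)$ and $z_j\in K$ with $x=s_jv_j+(1-s_j)z_j$. Expanding each $z_j$ as $z_j=\sum_{i=1}^n\gamma_i^{(j)}v_i$ with $\gamma^{(j)}\in\Delta^n$, and averaging the identities $x=s_jv_j+(1-s_j)z_j$ over $j=1,\dots,n$, yields
\[
x=\sum_{i=1}^n\left(\frac{s_i}{n}+\sum_{j=1}^n\frac{(1-s_j)\gamma_i^{(j)}}{n}\right)v_i.
\]
Each coefficient is bounded below by $s_i/n>0$, and a direct computation shows they sum to $1$, producing the desired representation in $\Delta^n\cap\mathbb{R}_{++}$.

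The main obstacle is the ``only if'' direction, specifically justifying the segment-extension step cleanly from the definition of $\relint K$ (equivalently, using that $x$ lies in the relative interior of some ball inside $K\cap\mathrm{aff}\,K$, so one can move a positive distance from $x$ away from each $v_j$ while staying in $K$) and then packaging the averaging argument so that the resulting coefficients are manifestly in $\Delta^n\cap\mathbb{R}_{++}$. Everything else reduces to routine affine-hull bookkeeping.
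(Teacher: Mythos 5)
The paper does not actually prove this lemma; it is quoted directly from Br{\o}ndsted, so there is no in-paper argument to compare against. Your proof is a correct, self-contained, first-principles argument, and both halves are sound: the averaging computation in the ``only if'' direction does produce coefficients bounded below by $s_i/n>0$ and summing to $1$, and the segment-extension step you worry about is exactly the standard prolongation principle ($x\in\relint K$ and $v_j\in K$ imply $(1+t)x-tv_j\in K$ for small $t>0$, since $v_j-x$ lies in the subspace parallel to $\operatorname{aff}K$). The one point I would tighten is in the ``if'' direction: your bound $|t|<\min_i\alpha_i/(1+\max_i|\beta_i|)$ depends on the chosen representation $d=\sum_i\beta_iv_i$, which is not unique, so to get a genuine relatively open neighbourhood of $x$ you need the bound to be uniform over unit directions $d$; this is fixed by selecting $\beta$ via a fixed linear right inverse of $\beta\mapsto\sum_i\beta_iv_i$ on the sum-zero subspace, giving $\max_i|\beta_i|\leq C\|d\|$. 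For comparison, the shortest route to the whole lemma is to view $K$ as the image of $\Delta^n$ under the affine map $A(\alpha)=\sum_i\alpha_iv_i$ and invoke $\relint A(\Delta^n)=A(\relint\Delta^n)$ together with $\relint\Delta^n=\Delta^n\cap\mathbb{R}^n_{++}$ (Rockafellar, Theorem~6.6, which the paper already cites in a similar role in Lemma~\ref{lem:cone}); that gives both directions in one line, whereas your argument has the merit of not presupposing that nontrivial fact.
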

 
\begin{lemma}\label{lem:cone} Let $y\in\Delta^n$. Then
\begin{enumerate}[(\alph*)]
    \item\label{item:normal} 
    $N_{\Delta^n}(y) = \{ \tau e - w~|~\tau\in\mathbb{R}, w\in\mathbb{R}^n_+\text{~s.t.~} w_i y_i = 0~\forall i\}.$
    \item\label{item:rintnormal}
    $\relint N_{\Delta^n}(y) = \{ \tau e - w~|~\tau\in\mathbb{R}, w\in\mathbb{R}^n_{+}\text{~s.t.~} w_i = 0 \text{~if and only if~} y_i > 0~\forall i\}.$
\end{enumerate}

\end{lemma}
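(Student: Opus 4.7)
For part (a), my plan is to argue directly from the definition of the normal cone. In the $\supseteq$ direction: given $\xi = \tau e - w$ with $w \in \mathbb{R}^n_+$ and $w_i y_i = 0$ for all $i$, I would expand
\[ \langle \xi, x - y\rangle = \tau\langle e, x - y\rangle - \langle w, x\rangle + \langle w, y\rangle = -\langle w, x\rangle \]
for any $x \in \Delta^n$, using $\langle e, x - y\rangle = 1 - 1 = 0$ and complementary slackness $\langle w, y\rangle = 0$. Since $w, x \in \mathbb{R}^n_+$, this quantity is nonpositive, placing $\xi$ in the normal cone.

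For the $\subseteq$ direction, let $S := \{i : y_i > 0\}$, which is nonempty because $\langle e, y\rangle = 1$. For distinct $i, j \in S$, both $y \pm \varepsilon(e_i - e_j)$ remain in $\Delta^n$ for sufficiently small $\varepsilon > 0$, so the normal-cone inequality applied in both directions forces $\xi_i = \xi_j$; denote this common value by $\tau$. For $i \notin S$, the one-sided perturbation $y + \varepsilon(e_i - e_j)$ with any $j \in S$ lies in $\Delta^n$ for small $\varepsilon > 0$ and yields $\xi_i \leq \tau$. Setting $w := \tau e - \xi$ then produces $w \in \mathbb{R}^n_+$ that vanishes on $S$, so $w_i y_i = 0$ holds everywhere and $\xi = \tau e - w$ has the desired form.

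For part (b), I would use the decomposition $N_{\Delta^n}(y) = \mathbb{R} e + W$, where $W := \{-w : w \in \mathbb{R}^n_+,\ w_i = 0\ \forall i \in S\}$, together with the finite-dimensional identity $\relint(A + B) = \relint(A) + \relint(B)$ for convex sets. The set $W$ is a face of $-\mathbb{R}^n_+$ isomorphic to $-\mathbb{R}_+^{|S^c|}$ padded with zeros on $S$, so $\relint W = \{-w : w_i = 0\ \forall i \in S,\ w_i > 0\ \forall i \in S^c\}$, while $\relint(\mathbb{R} e) = \mathbb{R} e$ since this is already a linear subspace. Summing the two yields precisely the stated set, since the conjunction of ``$w_i = 0$ on $S$'' and ``$w_i > 0$ on $S^c$'' is equivalent to the biconditional ``$w_i = 0$ if and only if $y_i > 0$''.

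The main obstacle I anticipate is in part (b): one must correctly identify the affine hull of $W$ as the coordinate subspace spanned by $\{e_i : i \in S^c\}$, and verify that the Minkowski-sum relative-interior formula applies (which it does in finite dimensions for convex sets without further qualification). A convenient sanity check on the final formula is the case $y \in \relint \Delta^n$: there $S = \{1,\dots,n\}$, $W$ collapses to $\{0\}$, and the lemma correctly reduces to $\relint N_{\Delta^n}(y) = \mathbb{R} e$.
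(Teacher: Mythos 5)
Your argument is correct, and for part (a) it takes a genuinely different route from the paper. The paper disposes of (a) in one line by citing the normal-cone formula for polyhedral sets \cite[Theorem 6.46]{rockafellar2009variational}, whereas you derive it from the definition: the $\supseteq$ computation $\langle \xi, x-y\rangle = -\langle w,x\rangle \le 0$, and, for $\subseteq$, feasible perturbations of $y$ along the edge directions $e_i-e_j$ to pin down $\xi_i=\xi_j=\tau$ on the support and $\xi_i\le\tau$ off it. This is longer but self-contained and makes visible exactly which active constraints generate the cone; the citation buys brevity. (Note that you are implicitly using the convention $\langle\xi,x-y\rangle\le 0$ for the normal cone, which is the one consistent with $N_C=\partial\iota_C$ and with the lemma as stated; the inequality in the paper's preliminaries is written with the opposite sign, apparently a typo, so your ``nonpositive'' conclusion is the right one.) For part (b) your proof is essentially the paper's: the paper writes $N_{\Delta^n}(y)=D+\sum_{i:y_i=0}C_i$ with $D=\mathbb{R}e$ and $C_i$ the coordinate rays $-\mathbb{R}_+e_i$, then applies \cite[Corollary 6.6.2]{rockafellar1997convex}; you bundle the rays into the single orthant face $W$, which requires identifying $\relint W$ via its affine hull $\mathrm{span}\{e_i\colon y_i=0\}$ but invokes the same Minkowski-sum rule for relative interiors of convex sets, valid without further qualification in finite dimensions. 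The two versions are interchangeable, and your sanity check at $y\in\relint\Delta^n$ (where $W=\{0\}$ and the formula degenerates to $\mathbb{R}e$) is a worthwhile addition.
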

\begin{proof}
Since $\Delta^n$ is polyhedral, the first part follows from normal cone characterisations of polyhedral sets \cite[Theorem 6.46]{rockafellar2009variational}. For part~\ref{item:rintnormal}, note that we can alternatively write $N_{\Delta^n}(y)$ as $D + \sum_{i:y_i=0} C_i$, where $D = \{(x,\dots,x)~|~x\in\mathbb{R}\}$ and $C_i = \{y\in\mathbb{R}^n\colon y_i \leq 0, y_j = 0~\forall j\neq i\}$. Then all sets are convex. Since $\relint D = D$ and $\relint C_i = \{y\in\mathbb{R}^n\colon y_i < 0, y_j = 0 \ \forall j\neq i\}$, the result follows by applying \cite[Corollary 6.6.2] {rockafellar1997convex}.
\end{proof}

    An mapping $F\colon U\to V$, between vector spaces $U$ and $V$, is said to be \textit{$L$-Lipschitz continuous} on a set $K\subseteq U$, for $L\in\mathbb{R}_+$, if $\|F(u) - F(v)\|\leq L\|u-v\|~\forall u,v\in K$. $F$ is \textit{locally} Lipschitz continuous  at a point $x_0\in U$ if there exists a neighbourhood $\mathcal{N}$ of $x_0$ and $L\in\mathbb{R}_+$, such that $F$ is  $L$-Lipschitz on $\mathcal{N}$, and locally Lipschitz on $K\subseteq U$ if it is locally Lipschitz at each $x_0\in K$. In the finite dimensional setting, this is equivalent to Lipschitz continuity on each compact subset $K^\prime\subseteq K$. If $F$ is (locally) Lipschitz on $\Hilbert$, then $F$ is simply said to be (locally) Lipschitz. 
    If $U=V=\Hilbert$, then $F$ is said to be \textit{monotone} if $\langle F(u)-F(v),u-v\rangle\geq 0~\forall u,v\in \Hilbert$.
    
    A function $\phi\colon\Hilbert_1\times\Hilbert_2\to\mathbb{R}$ is said to be \textit{convex-concave} if $\phi(\cdot,y)$ is convex for all $y\in \Hilbert_2$ and $\phi(x,\cdot)$ is concave for all $x\in\Hilbert_1$. A pair $(x^*,y^*)\in X\times Y\subseteq\Hilbert_1\times\Hilbert_2$ is said to be a \emph{saddle-point} of $\phi$ on $X\times Y$ if 
$$\phi(x^*,y)\leq\phi(x^*,y^*)\leq\phi(x,y^*)\quad\forall (x,y)\in X\times Y.$$
If $\phi$ is differentiable, the saddle operator $F\colon\Hilbert_1\times\Hilbert_2\to\Hilbert_1\times\Hilbert_2$ of $\phi$ is defined by
$$F(x,y) = (\nabla_x \phi(x,y), -\nabla_y \phi(x,y)).$$

\paragraph{KKT Theory.} Consider a general nonlinear program
\begin{equation}\label{eq:nlp}
\begin{array}{rrclcl}
\displaystyle \min_{x\in\Hilbert} & \multicolumn{1}{l}{ f(x)}\\
\textrm{s.t.} & g_i(x)\leq 0\quad\forall i=1,\dots,N,
\end{array}
\end{equation}
where $f\colon\Hilbert\to\mathbb{R},g_i\colon\Hilbert\to\mathbb{R}$ are assumed smooth and convex for all $i$. The \textit{Lagrangian function} associated with~\eqref{eq:nlp} is the function $\mathcal{L}\colon\Hilbert\times\mathbb{R}^N\to\mathbb{R},$ defined by
\begin{equation}\label{eq:lagrangian}
\mathcal{L}(x,\mu) := f(x) + \sum_{i=1}^N \mu_i g_i(x).
\end{equation}

The problem~\eqref{eq:nlp} satisfies the \emph{Mangasarian-Fromovitz Constraint Qualification (MFCQ)}~\cite{MANGASARIAN196737} at a feasible point $x_0\in\Hilbert$ if there exists $v\in\Hilbert$ such that
\begin{equation}\label{eq:MFCQ}
\langle\nabla g_i(x_0),v\rangle<0,
\end{equation}
for all indices $i\in\{1,\dots,N\}$ where $g_i(x_0)=0$. In the setting of~\eqref{eq:nlp}, this condition is equivalent to \textit{Slater's Condition} \cite[p.~45]{Borwein2006}. Under this assumption, $x^*\in\Hilbert$ is a solution to~\eqref{eq:minmax} if and only if there exists $\mu^*\in\mathbb{R}_+^N$ such that $(x^*,\mu^*)$ is a saddle point of $\mathcal{L}$ on $\Hilbert\times\mathbb{R}^N_+$ (see, for instance, \cite[Theorems 2.3.8/3.2.8 \& Proposition 3.2.3]{Borwein2006}). In this context, the components of $\mu^*$ are called \textit{KKT multipliers}. The MFCQ~\eqref{eq:MFCQ} is then known to be equivalent to boundedness of the set of KKT multipliers $\mu^*\in\mathbb{R}^N_+$ (see \cite{Gauvin1977ANA}). Furthermore, if we consider the index set $I^+$ of $i\in I$ such that a KKT multiplier $\mu^*_i>0$, and if in addition to~\eqref{eq:MFCQ} the set $\{\nabla g_i(x_0)~|~i\in I^+\}$ is linearly independent, then the problem~\eqref{eq:nlp} is said to satisfy the \textit{Strict Mangasarian Fromovitz Constraint Qualification (SMFCQ)}. This is equivalent to uniqueness of the KKT multipliers (see \cite[Proposition 1.1]{kyparisis1985uniqueness}) at $x_0$.

The saddle point $(x^*,\mu^*)$ described above satisfies the following conditions (called \textit{KKT condtions})

\begin{itemize}
    \item (Stationarity) $0 = \nabla_x \mathcal{L}(x^*,\mu^*)=\nabla f(x^*)+\sum_{i=1}^N\mu^*_i\nabla g_i(x^*)$.
    \item (Feasibility) $g_i(x^*)\leq 0$ for all $i=1,\dots,N$.
    \item (Complementary Slackness) $\mu^*_i g_i(x^*) = 0$ for all $i=1,\dots,N$.
\end{itemize}

In addition, the pair $(x^*,\mu^*)$ is said to satisfy the \textit{strict complementary slackness condition} if $\mu^*_i=0\iff g_i(x^*)>0$ for all $i$.

\section{Saddle Reformulation}\label{sec:saddle}

In this section, we justify reformulating the finite-max minimisation problem~\eqref{eq:minmax} as the saddle point problem~\eqref{eq:saddle}. In particular, we explore the correspondence between minimisers $f$ and saddle points of $\phi$. We then discuss some properties of saddle-formulation, and show how it can be solved within the \textit{Variational Inequality} framework. In what follows, the active support at a point $x\in\Hilbert$ is denoted
$$I(x) := \{i\in I\colon f_i(x)=f(x)\}.$$
The following result then explores the correspondence between \eqref{eq:minmax} and \eqref{eq:saddle}.
\begin{theorem}\label{thm:reform}Let $f=\max\{f_1,\dots,f_N\}$ be as defined in~\eqref{eq:minmax}, and $\phi$ as in~\eqref{eq:saddle}. Then the following conditions are equivalent.
    \begin{enumerate}[(a)]
        \item $x^*\in\argmin f$
        \item\label{item:opt conditions} There exists $y^*\in\Delta^N$ such that $\nabla_x \phi(x^*,y^*)=0$ and $f(x^*)=\phi(x^*,y^*)$
        \item There exists $y^*\in\Delta^N$ such that $(x^*,y^*)$ is a saddle point on $\phi$ on $\Hilbert\times\Delta^N$.
    \end{enumerate}
    Moreover, for $y^*\in\Delta^N$ in (b) and (c), $y^*_i>0$ only if $i\in I(x^*)$.
\end{theorem}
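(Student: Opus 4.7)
The plan is to prove the three-way equivalence by establishing the cycle (a)$\Rightarrow$(b)$\Rightarrow$(c)$\Rightarrow$(a), and to extract the support statement $y^*_i > 0 \Rightarrow i \in I(x^*)$ from the identity $\phi(x^*,y^*) = f(x^*)$, which will appear at several points along the cycle.

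For (a)$\Rightarrow$(b), I would start from the first-order optimality condition $0 \in \partial f(x^*)$ provided by Lemma~\ref{lem:subdiff}\ref{item:opt}. Applying the max-rule Lemma~\ref{lem:subdiff}\ref{item:max-rule} gives $0 \in \conv\{\nabla f_i(x^*) : i \in I(x^*)\}$, so there exist coefficients $\alpha_i \geq 0$ indexed by $I(x^*)$ summing to $1$ with $\sum_{i \in I(x^*)} \alpha_i \nabla f_i(x^*) = 0$. Define $y^* \in \Delta^N$ by setting $y^*_i = \alpha_i$ for $i \in I(x^*)$ and $y^*_i = 0$ otherwise. Then $\nabla_x \phi(x^*, y^*) = \sum_{i \in I} y^*_i \nabla f_i(x^*) = 0$, and since $y^*$ is supported in $I(x^*)$ we get $\phi(x^*, y^*) = \sum_{i \in I(x^*)} y^*_i f_i(x^*) = f(x^*)\sum_{i \in I(x^*)} y^*_i = f(x^*)$. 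This delivers (b) together with the support claim.

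For (b)$\Rightarrow$(c), suppose $\nabla_x \phi(x^*, y^*) = 0$ and $\phi(x^*, y^*) = f(x^*)$. Since $\phi(\cdot, y^*) = \sum_i y^*_i f_i$ is a convex combination of convex differentiable functions (hence convex), the gradient condition yields $x^* \in \argmin_{x \in \Hilbert} \phi(x, y^*)$, i.e.\ $\phi(x^*, y^*) \leq \phi(x, y^*)$ for all $x \in \Hilbert$. For the other saddle inequality, for any $y \in \Delta^N$ we have $\phi(x^*, y) = \sum_i y_i f_i(x^*) \leq \max_{i \in I} f_i(x^*) = f(x^*) = \phi(x^*, y^*)$, where the last equality is the hypothesis in (b). Both saddle inequalities hold, so $(x^*, y^*)$ is a saddle point on $\Hilbert \times \Delta^N$.

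For (c)$\Rightarrow$(a), the saddle inequalities imply that for every $x \in \Hilbert$,
\begin{equation*}
f(x^*) = \max_{y \in \Delta^N}\phi(x^*, y) = \phi(x^*, y^*) \leq \phi(x, y^*) \leq \max_{y \in \Delta^N}\phi(x, y) = f(x),
\end{equation*}
using that $\max_{y \in \Delta^N}\sum_i y_i f_i(x) = \max_{i \in I}f_i(x) = f(x)$. Hence $x^* \in \argmin f$. The moreover-claim also follows in this direction since the first equality above forces $\phi(x^*, y^*) = f(x^*) = \sum_i y^*_i f_i(x^*)$; combined with $f_i(x^*) \leq f(x^*)$ and $y^* \in \Delta^N$, this rules out any $y^*_i > 0$ unless $f_i(x^*) = f(x^*)$. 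None of the steps presents a serious obstacle; the only point that requires care is recognising that the identity $\phi(x^*,y^*) = f(x^*)$ is precisely what constrains the support of $y^*$ to $I(x^*)$, since it forces all of $y^*$'s mass to sit where $f_i$ attains its maximum.
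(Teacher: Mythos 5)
Your proof is correct and follows essentially the same route as the paper: the same cycle (a)$\Rightarrow$(b)$\Rightarrow$(c)$\Rightarrow$(a), using Lemma~\ref{lem:subdiff} for the first implication, convexity of $\phi(\cdot,y^*)$ plus the bound $\phi(x^*,y)\leq f(x^*)$ for the second, and the chain $f(x^*)=\phi(x^*,y^*)\leq\phi(x,y^*)\leq f(x)$ for the third. Your explicit derivation of the support claim from the equality $\sum_i y^*_i f_i(x^*)=f(x^*)$ is a slightly more complete justification of the ``moreover'' part than the paper spells out, but the substance is the same.
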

\begin{proof}$(a)\implies(b)$ As a result of Lemma~\ref{lem:subdiff}, there exists $y^*\in\Delta^N$ such that $y^*_i=0$ for all $i\notin I(x^*)$ and $0 = \sum_{i\in I}y^*_i\nabla f_i(x^*)=\nabla_x \phi(x^*,y^*)$. Then, since $f_i(x^*)=f(x^*)$ for all $i\in I(x^*)$ by definition, it follows that 
$$\phi(x^*,y^*) = \sum_{i\in I} y^*_i f_i(x^*) = \sum_{i\in I(x^*)}y^*_i f(x^*) = f(x^*).$$
    
$(b)\implies(c)$ Since $0 = \nabla_x\phi(x^*,y^*)$ and $\phi(\cdot,y^*)$ is convex, it follows that $\phi(x^*,y^*)\leq \phi(x,y^*)$ for all $x\in\Hilbert$. For $y\in\Delta^N$, we have
$$\phi(x^*,y) = \sum_{i\in I} y_i f_i(x^*) \leq \sum_{i\in I}y_i f(x^*) = f(x^*) =\phi(x^*,y^*),$$ 
The result follows by combing these two inequalities.

$(c)\implies(a)$ Let $x\in\Hilbert$ be arbitrary, and note that $f(x) = \max_{y\in\Delta^N} \phi(x,y)$. Then since $(x^*,y^*)$ is a saddle point of $\phi$ on $\Hilbert\times\Delta^N$, we see that
$$f(x^*) = \max_{y\in\Delta^N}\phi(x^*,y) = \phi(x^*,y^*)\leq\phi(x,y^*)\leq\max_{y\in\Delta^N}\phi(x,y)=f(x),$$
which concludes the proof.
\end{proof}


Theorem~\ref{thm:reform} shows that every solution $x^*$ of the nonsmooth minimisation problem~\eqref{eq:minmax} corresponds to a saddle-point $(x^*,y^*)$ of \eqref{eq:saddle} for some $y^*\in\Delta^N$, and vice versa. In the latter setting, it also shows that the variable $y^*$ contains additional information about $f$ at $x^*$. In particular, the support of $y^*$ ($\{i\in I\colon y^*_i>0\}$) is a subset of $I(x^*)$, and $y^*$ consists of convex multipliers satisfying $0=\sum_{i\in I(x^*)}y_i^*\nabla f_i(x^*)\in\partial f(x^*)$.

Given a minimiser $x^*$ of $f$, we emphasise that $\overline{y}\in\max_{y\in\Delta^N}\phi(x^*,y)$ does not necessarily imply that $(x^*,\overline{y})$ is a saddle-point of $\phi$ on $\Hilbert\times\Delta^N$, even though $f(x^*)=\phi(x^*,\overline{y})=\phi(x^*,y^*)$. This is illustrated in the following example.
\begin{example}\label{ex:saddle}
Consider $f=\max\{f_1,f_2\}$ where the subfunctions $f_1,f_2\colon\mathbb{R}\to\mathbb{R}$ are given by
$$f_1(x) = (x+1)^2, \quad f_2(x) = (x-1)^2.$$
Then $x^*=0$ is the unique minimiser of $f$, and $\overline{y}=(1,0)\in\argmax_{y\in\Delta^2}\phi(x^*,y)$. Although $f(x^*)=\phi(x^*,\overline{y})$,  the point $(x^*,\overline{y})$ is not saddle point of $\phi$ on $\Hilbert\times\Delta^2$ as
$$\phi(x^*,\overline{y}) = (x^*+1)^2 = 1 > 0 = \phi(-1,\overline{y}). $$
However, by revisiting Theorem~\ref{thm:reform}\ref{item:opt conditions}, it can be verified that the unique saddle-point of $\phi$ on $\Hilbert\times\Delta^2$ is given by $(x^*,y^*)$ with $y^*=\left(\frac{1}{2}, \frac{1}{2}\right)$.
\end{example}

For fixed $x^*\in\argmin f$, we denote by $\Lambda(x^*)$ the corresponding solutions $y^*\in\Delta^N$ such that $(x^*,y^*)$ satisfy the conditions of Theorem \ref{thm:reform}. That is,
$$\Lambda(x^*) := \{y^*\in\Delta^N\colon (x^*,y^*)\text{~is a saddle point of~}\phi\text{~on~}\Hilbert\times\Delta^N\}.$$

We will now consider two refinements of Theorem~\ref{thm:reform}, which will be of use in the subsequent sections. We will distinguish between the active support $I(x^*)$ 
and the \textit{strongly} active support 
$$I^+(x^*):=\{i\in I\colon\exists~y\in\Lambda(x^*)\text{~s.t.~}y_i>0\}.$$
Then $I^+(x^*)\subseteq I(x^*)$ as discussed in Theorem~\ref{thm:reform}, with equality if and only if the nondegeneracy condition holds in the following theorem. Where $I(x^*)$ denotes the set of active subfunctions, the set $I^+(x^*)$ asks for a combination gradients such that $0$ is contained in the convex hull.

\begin{theorem}\label{thm:nondegeneracy}
    Let $x^*\in\argmin f$, where $f=\max\{f_1,\dots,f_N\}$ as in~\eqref{eq:minmax}. Then the following conditions are equivalent.
    \begin{enumerate}[label=(\alph*)]
        \item\label{item:nondegen} $0\in\relint\partial f(x^*)$.
        \item There exists $y^*\in\Lambda(x^*)$ such that $\left(f_1(x^*),\dots,f_N(x^*)\right)\in\relint N_{\Delta^N}(y^*)$.
        \item $I(x^*) = I^+(x^*)$.
    \end{enumerate}
\end{theorem}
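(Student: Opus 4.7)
The plan is to reduce all three conditions to a single combinatorial statement about supports of elements of $\Lambda(x^*)$. Theorem~\ref{thm:reform} identifies $\Lambda(x^*)$ as the convex polytope of $y\in\Delta^N$ with $y_i=0$ for $i\notin I(x^*)$ and $\sum_{i\in I(x^*)} y_i\nabla f_i(x^*)=0$. Since a convex combination with strictly positive weights has support equal to the union of the supports of its summands, averaging any finite collection of elements of $\Lambda(x^*)$ whose supports together cover $I^+(x^*)$ produces a single $\tilde y\in\Lambda(x^*)$ with support exactly $I^+(x^*)$. Consequently (c), namely $I^+(x^*)=I(x^*)$, is equivalent to the existence of some $y^*\in\Lambda(x^*)$ with $\{i:y^*_i>0\}=I(x^*)$, and I would prove (a) and (b) equivalent to this common restatement.

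For the equivalence of (a) with (c), I would invoke Lemma~\ref{lem:subdiff}\ref{item:max-rule} to write $\partial f(x^*)=\conv\{\nabla f_i(x^*):i\in I(x^*)\}$ and then apply Lemma~\ref{lem:rintCH}. The condition $0\in\relint\partial f(x^*)$ then becomes the existence of strictly positive weights $\alpha_i>0$ for $i\in I(x^*)$ summing to one and satisfying $\sum_{i\in I(x^*)}\alpha_i\nabla f_i(x^*)=0$, which is precisely the data defining a $y^*\in\Lambda(x^*)$ with support equal to $I(x^*)$; the reverse direction reads these weights off such a $y^*$.

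For the equivalence of (b) with (c), I would apply Lemma~\ref{lem:cone}\ref{item:rintnormal} to the vector $v:=(f_1(x^*),\dots,f_N(x^*))$. The key step is that any representation $v=\tau e-w$ with $w\in\mathbb{R}^N_+$ and $w_i y^*_i=0$ must have $\tau=v_i$ for every $i$ with $y^*_i>0$; since $\{i:y^*_i>0\}\subseteq I(x^*)$ and $v_i=f(x^*)$ exactly when $i\in I(x^*)$, this forces $\tau=f(x^*)$ and $w_i=f(x^*)-f_i(x^*)\geq 0$. The relative-interior condition $w_i=0\iff y^*_i>0$ therefore collapses to $\{i:y^*_i>0\}=I(x^*)$, matching the common restatement of (c).

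The main technical point is to verify that $v\in N_{\Delta^N}(y^*)$ in the first place, so that Lemma~\ref{lem:cone} is applicable, and that the decomposition $v=\tau e-w$ is effectively unique given $y^*$. Both facts reduce to $\{i:y^*_i>0\}\subseteq I(x^*)$, which Theorem~\ref{thm:reform} supplies for free. Beyond that the argument is bookkeeping, with the averaging observation for realising $I^+(x^*)$ as the support of a single element of $\Lambda(x^*)$ being the only additional ingredient.
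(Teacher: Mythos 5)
Your proposal is correct, and it draws on the same ingredients as the paper's proof (Lemma~\ref{lem:subdiff}\ref{item:max-rule}, Lemma~\ref{lem:rintCH}, Lemma~\ref{lem:cone}\ref{item:rintnormal}, and Theorem~\ref{thm:reform}\ref{item:opt conditions}), but the organisation is genuinely different: the paper proves the cycle $(a)\Rightarrow(b)\Rightarrow(c)\Rightarrow(a)$, whereas you route all three conditions through the single pivot statement ``there exists $y^*\in\Lambda(x^*)$ whose support equals $I(x^*)$.'' The notable addition in your version is the averaging argument: since $\Lambda(x^*)$ is convex and supports of nonnegative vectors are additive under strictly positive convex combinations, you can manufacture a single $\tilde y\in\Lambda(x^*)$ with support exactly $I^+(x^*)$. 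This is precisely the step the paper's $(c)\Rightarrow(a)$ direction needs but leaves implicit --- there the proof writes $0=\sum_{i\in I^+(x^*)}y^*_i\nabla f_i(x^*)$ for ``the'' multiplier $y^*$ as if a single element of $\Lambda(x^*)$ were already known to have full support on $I^+(x^*)$, which does not follow from the definition of $I^+(x^*)$ as a union of supports. Your treatment of (b) is also slightly more careful in verifying that any decomposition $v=\tau e - w$ compatible with Lemma~\ref{lem:cone}\ref{item:rintnormal} forces $\tau=f(x^*)$, so that the relative-interior condition genuinely collapses to the support condition. In short, you buy a cleaner logical structure and close a small gap, at the cost of one extra (easy) convexity observation.
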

\begin{proof}
$(a)\implies(b)$ Lemmas~\ref{lem:subdiff}\ref{item:max-rule} and~\ref{lem:rintCH} together imply the existence of $y^*\in\Delta^N$ such that $y^*_i>0$ if and only if $i\in I(x^*)$, and 
$$\sum_{i\in I(x^*)}y^*_i\nabla f_i(x^*) = 0.$$
Then $\sum_{i\in I}y^*_i f_i(x^*) = \sum_{i\in I(x^*)} y^*_i f(x^*) = f(x^*)$, and it follows from Theorem~\ref{thm:reform}\ref{item:opt conditions} that $y^*\in\Lambda(x^*)$. Then by applying Lemma~\ref{lem:cone}\ref{item:normal} we see that
$$(f_1(x^*),\dots,f_N(x^*)) = f(x^*) e - (f(x^*) - f_1(x^*),\dots,f(x^*) - f_N(x^*))\in\relint N_{\Delta^N}(y^*),$$
where the inclusion follows since $w_i=f(x^*) - f_i(x^*) = 0$ if and only if $y^*_i>0$. $(b)\implies(c)$ Note that the one sided inclusion $I^+(x^*)\subseteq I(x^*)$ already holds. Then if $\left(f_1(x^*),\dots,f_N(x^*)\right)\in\relint N_{\Delta^N}(y^*)$, from Lemma~\ref{lem:cone}\ref{item:rintnormal} we have $w_i = f(x^*) - f_i(x^*) = 0$ if and only if $y^*_i>0$, and hence
$$I(x^*) = \{i\in I\colon y^*_i>0\}\subseteq I^+(x^*).$$
$(c)\implies(a)$ From Lemma~\ref{lem:rintCH} and Theorem~\ref{thm:reform}, we observe that
    $$0 = \sum_{i\in I^+(x^*)} y^*_i\nabla f_i(x^*) = \sum_{i\in I(x^*)}y^*_i \nabla f_i(x^*)\in\relint\partial f(x^*).$$
This completes the proof.    
\end{proof}
Note that~\ref{item:nondegen} in Theorem~\ref{thm:nondegeneracy} is called the \textit{non-degeneracy} condition and is a common assumption in the literature of constraint/support identification. 

\begin{theorem}\label{thm:SMFCQ}
Let $x^*\in\argmin f$, where $f=\max\{f_1,\dots,f_N\}$ as in~\eqref{eq:minmax}. Then the vectors of the set $\{\nabla f_i(x^*)~|~i\in I^+(x^*)\}$ are affinely independent if and only if $\Lambda(x^*)$ is a singleton. 
\end{theorem}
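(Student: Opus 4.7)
The plan is to first unpack $\Lambda(x^*)$ concretely using Theorem~\ref{thm:reform}\ref{item:opt conditions}: a vector $y^*\in\Delta^N$ belongs to $\Lambda(x^*)$ if and only if its support is contained in $I(x^*)$ and $\sum_{i\in I}y^*_i\nabla f_i(x^*)=0$. In this description, $\Lambda(x^*)$ is manifestly a convex set (the intersection of $\Delta^N$ with two affine conditions), so the whole equivalence reduces to a linear-algebraic question about the coordinates indexed by $I^+(x^*)$.

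The key preparatory step---and the step I expect to be the main obstacle, since $I^+(x^*)$ is defined via an existential quantifier over $\Lambda(x^*)$---is to produce a distinguished $\bar y\in\Lambda(x^*)$ whose support is exactly $I^+(x^*)$. By the very definition of $I^+(x^*)$, for each $i\in I^+(x^*)$ some $y^{(i)}\in\Lambda(x^*)$ has $y^{(i)}_i>0$; since $I^+(x^*)$ is finite and $\Lambda(x^*)$ is convex, the uniform average of the $y^{(i)}$ lies in $\Lambda(x^*)$ and has support equal to $\bigcup_i\text{supp}(y^{(i)})=I^+(x^*)$.

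With $\bar y$ in hand, both implications become straightforward. For the forward direction, assuming affine independence, I would take any $y^*\in\Lambda(x^*)$, note that $\text{supp}(y^*)\subseteq I^+(x^*)$ by definition of $I^+$, and observe that $\alpha:=(y^*-\bar y)|_{I^+(x^*)}$ satisfies $\sum_{i\in I^+(x^*)}\alpha_i=0$ and $\sum_{i\in I^+(x^*)}\alpha_i\nabla f_i(x^*)=0$; affine independence then forces $\alpha=0$, whence $y^*=\bar y$. For the reverse direction, I would contrapose: given nontrivial $(\alpha_i)_{i\in I^+(x^*)}$ with $\sum_i\alpha_i=0$ and $\sum_i\alpha_i\nabla f_i(x^*)=0$, extend $\alpha$ by zero outside $I^+(x^*)$ and form $\bar y+\varepsilon\alpha$. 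Since $\bar y_i>0$ for every $i\in I^+(x^*)$, this perturbation stays in $\Delta^N$ for all sufficiently small $\varepsilon>0$, still satisfies the support and gradient conditions, and is distinct from $\bar y$, contradicting singletonness. The positivity of $\bar y$ on $I^+(x^*)$ is precisely the slack that makes the perturbation go through, which is why Step~2 is the crux of the argument.
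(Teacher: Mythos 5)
Your proof is correct and follows the same overall skeleton as the paper's: the forward direction subtracts two elements of $\Lambda(x^*)$ and invokes affine independence, and the reverse direction perturbs an element of $\Lambda(x^*)$ along an affine dependence relation. The one genuine difference is your Step~2, the construction of a distinguished $\bar y\in\Lambda(x^*)$ with support exactly $I^+(x^*)$ by averaging the witnesses $y^{(i)}$ over the convex set $\Lambda(x^*)$. The paper does not do this: in its $(\impliedby)$ direction it takes an arbitrary $y^*\in\Lambda(x^*)$ and asserts ``$y^*_i>0$ for all $i\in I^+(x^*)$,'' which does not follow from the definition of $I^+(x^*)$ (an existential over $\Lambda(x^*)$) unless one has already assumed $\Lambda(x^*)$ is a singleton, or has your averaging argument in hand. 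Your version is therefore slightly more careful on exactly the point you flagged as the crux; it costs one extra observation (convexity of $\Lambda(x^*)$, which is immediate from the affine characterisation in Theorem~\ref{thm:reform}\ref{item:opt conditions}) and buys a perturbation base point that works unconditionally in both directions. No gaps.
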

\begin{proof}We first observe as a result of Theorem~\ref{thm:reform} that $\Lambda(x^*)$ is non-empty. By rearranging if necessary, we can suppose without loss of generality that $I^+(x^*)=\{1,\dots,n\}$ for some $n\leq N$.

$(\implies)$ Let $y^*,\overline{y}\in\Lambda(x^*)$. Then
$$0 = \sum_{i=1}^n y^*_i\nabla f_i(x^*) - \sum_{i=1}^n \overline{y}_i\nabla f_i(x^*) = \sum_{i=1}^n (y^*_i - \overline{y}_i)\nabla f_i(x^*),$$
and $\sum_{i=1}^n y^*_i - \overline{y}_i = 1-1 = 0$. Since $\{\nabla f_i(x^*)~|~i\in I^+(x^*)\}$ is affinely independent, we have $y^*_i - \overline{y}_i = 0$ for all $i\in I^+(x^*)$, thus $y^*=\overline{y}$ and we conclude that $\Lambda(x^*)$ is a singleton.

$(\impliedby)$ Let $y^*\in\Lambda(x^*)$ and suppose $\{\nabla f_i(x^*)~|~i\in I^+(x^*)\}$ is affinely dependent. Then there exist $\alpha=(\alpha_1,\dots,\alpha_n,0,\dots,0)\in\mathbb{R}^N$ and an index $j\in I^+(x^*)$ such that 
\begin{equation}\label{eq:alpha}
\alpha_j\neq 0,\quad 0 =\sum_{i=1}^n\alpha_i\nabla f_i(x^*),\quad \sum_{i=1}^n\alpha_i = 0.
\end{equation}
Since $y^*_i>0$ for all $i\in I^+(x^*)$, we can scale $\alpha$ without loss of generality (while preserving \eqref{eq:alpha}) so that $y^*_i+\alpha_i\geq 0$ for all $i\in I^+$. Then $\sum_{i=1}^n y^*_i+\alpha_i = 1$, and $0 = \sum_{i=1}^n(y^*_i+\alpha_i)\nabla f_i(x^*) = 0$. It follows that $y^*+\alpha\in\Lambda(x^*)$, but $y^*\neq y^*+\alpha$ since  $\alpha_j\neq 0$. Thus, $\Lambda(x^*)$ is not singleton, which completes the proof.
\end{proof}

It is important to note that Theorem~\ref{thm:SMFCQ} does not ensure uniqueness of the maximisers $\overline{y}$ of $\phi(x^*,\cdot)$, and that $y^*\in\Lambda(x^*)$ is characterised by the additional requirement of $0 = \sum_{i\in I} y^*_i \nabla f_i(x^*)$ as discussed in Example \ref{ex:saddle}. We will visualise the preceding two theorems for a simple example in Figure~\ref{fig:examples}.

\begin{figure}[h!]
    \centering
     \begin{subfigure}[t]{.325\linewidth}
    \centering\includegraphics[width=\linewidth]{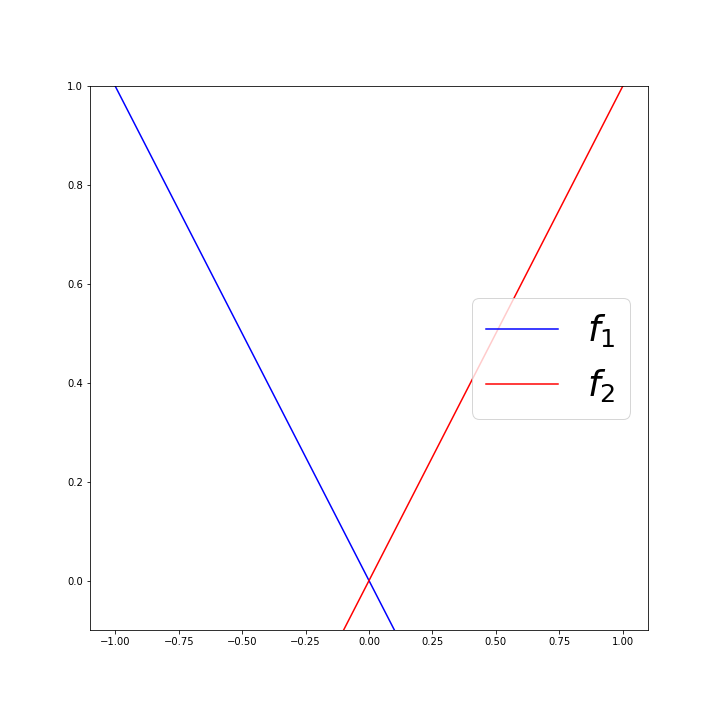}
    \caption{$f_1(x) = -x, f_2(x) = x$}
  \end{subfigure}
  \begin{subfigure}[t]{.325\linewidth}
    \centering\includegraphics[width=\linewidth]{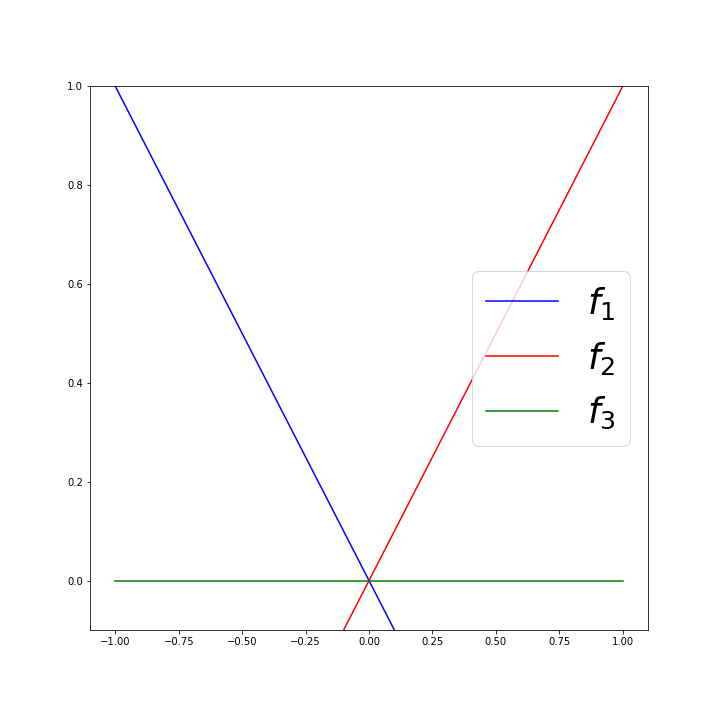}
    \caption{ $f_1(x) = -x, f_2(x) = x,$\\ $f_3(x) = 0$}
  \end{subfigure}
  \begin{subfigure}[t]{.325\linewidth}
    \centering\includegraphics[width=\linewidth]{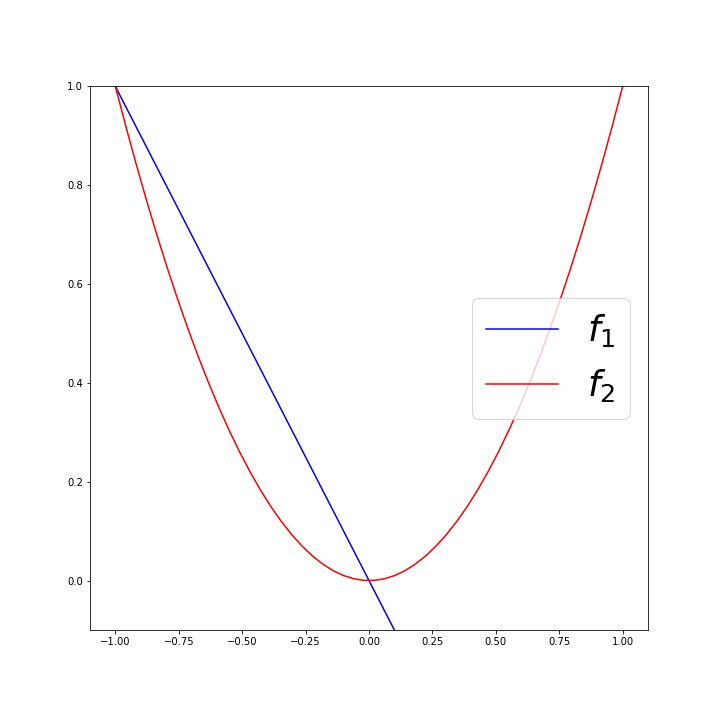}
    \caption{$f_1(x) = -x, f_2(x) = x^2$}
  \end{subfigure}
  \caption{Examples satisfying both Theorems~\ref{thm:nondegeneracy} and~\ref{thm:SMFCQ} (a), only Theorem~\ref{thm:nondegeneracy} (b), and only Theorem~\ref{thm:SMFCQ} (c).}
  \label{fig:examples}
\end{figure}

In all cases, $x^*=0$ and $I(x^*) = I$, so points in $\overline{y}\in\argmax_{y\in\Delta^N} \phi(x^*,y)$ can be chosen as any $\overline{y}\in\Delta^N$, as in Example~\ref{ex:saddle}. However, solving for $0 = y^*_1\nabla f_1(x^*) + y^*_2\nabla f_2(x^*) = -y^*_1 + y^*_2$, together with $y^*\in\Delta^2$, shows that $y^*=(\frac{1}{2},\frac{1}{2})\in\Lambda(x^*)$ is unique in part (a). Then $I^+(x^*)=\{1,2\}=I(x^*)$ and (a) satisfies both Theorems. Part (b) is similar, except $\Lambda(x^*)=\{y^*\in\Delta^3\colon y^*_1=y^*_2\}$ is not a singleton, but we still have nondegeneracy since $(\frac{1}{2},\frac{1}{2},0),(0,0,1)\in\Lambda(x^*)$ and thus $I^+(x^*)=\{1,2,3\}=I(x^*)$. Finally, in part (c) we have uniqueness of $y^*=(0,1)\in\Lambda(x^*)$, but $I^+(x^*)=\{2\}\subset I(x^*)=\{1,2\}$. Therefore, (c) satisfies Theorem~\ref{thm:SMFCQ} but not nondegeneracy.

\begin{remark}
    By introducing an auxiliary variable $\tau\in\mathbb{R}$ and considering the \textit{epigraphical formulation} of~\eqref{eq:minmax} given by
    \begin{equation}\label{eq:epi}
    \begin{aligned}
        \min_{(x,\tau)\in\Hilbert\times\mathbb{R}} & \tau\\
        \text{s.t~} & f_i(x) - \tau\leq 0\quad\forall i=1,\dots,N,
    \end{aligned}
    \end{equation}
    and the corresponding Lagrangian 
    $$\mathcal{L}((x,\tau),\mu) = \tau + \sum_{i=1}^N\mu_i(f_i(x)-\tau),$$
    we observe that $y^*\in\Lambda(x^*)$ can be equivalently defined as the KKT vector for this system. This largely follows from the stationarity condition $0=\frac{\partial\mathcal{L}((x^*,\tau^*),\mu^*)}{\partial\tau} = 1-\sum_{i=1}^N\mu^*_i$. In this context, the Theorems of this section can be analoguously written in terms of the KKT conditions. Theorem~\ref{thm:reform}\ref{item:opt conditions} is equivalent to stationarity and complementary slackness. Theorem~\ref{thm:nondegeneracy} is the same as strict complementary slackness. Finally, Theorem~\ref{thm:SMFCQ} amounts to the SMFCQ, noting that the standard MFCQ automatically applies to~\eqref{eq:epi}, and the SMFCQ is known to be equivalent to the uniqueness of the KKT multipliers \cite[Proposition 1.1]{kyparisis1985uniqueness}.
\end{remark}

The point $(x^*,y^*)$ is a saddle-point of $\phi$ if and only if it satisfies the following pair of equations, noting that $\partial(-\phi(x^*,\cdot) + \iota_{\Delta^N}) = -\nabla_y\phi(x^*,\cdot) + N_{\Delta^N}$ as a result of the subdifferential sum rule \cite[Corollary 16.48]{Bauschke2017} since $\dom\phi(x^*,\cdot)=\mathbb{R}^N$.
\begin{equation}\label{eq:first-order saddle}
    \left\{\begin{aligned}
    \nabla_x\phi(x^*,y^*) &= 0\\
    \langle-\nabla_y\phi(x^*,y^*),y-y^*\rangle &\geq 0\quad\forall y\in\Delta^N.
\end{aligned}\right.
\end{equation}
The system \eqref{eq:first-order saddle} can be compactly expressed as the \emph{variational inequality}
\begin{equation}\label{eq:VIP}
    \langle F(z^*),z-z^*\rangle\geq 0\quad\forall z\in K,
\end{equation}
where
$ z=(x,y)\in\Hilbert\oplus\mathbb{R}^N$, $F(z) = (\nabla_x\phi(x,y),-\nabla_y\phi(x,y))$, and $K=\Hilbert\times\Delta^N.$ 

We will conclude this section by summarising properties of the variational inequality \eqref{eq:first-order saddle} that will be useful for applying convergence results of  iterative algorithms (\emph{e.g.,} \cite{facchinei2003finite, kinderlehrer2000introduction, malitsky2020golden}) for solving~\eqref{eq:VIP}.
\begin{proposition}\label{prop:prop}Let $f=\max\{f_1,\dots,f_N\}$ be as defined in~\eqref{eq:minmax}, and $\phi$ as in~\eqref{eq:saddle}. Consider the saddle operator given by $F(z) = (\nabla_x\phi(x,y),-\nabla_y\phi(x,y))$ for $z=(x,y)\in\Hilbert\times\Delta^N$. Then the following assertions hold.
    \begin{enumerate}[label=(\alph*)]
        \item $F$ is monotone.
        \item If $\nabla f_i$ is $L$-Lipschitz continuous in a set $K\subseteq\Hilbert$ and $\|\nabla f_i(x)\|\leq M$ for all $x\in K$ and $i\in I$, then $F$ is $\overline{L}$-Lipschitz continuous on $K\times\Delta^N$ with $\overline{L}=\sqrt{L^2 + 2M^2N}$.
        \item If $\nabla f_i$ is locally Lipschitz continuous for all $i\in I$, then $F$ is locally Lipschitz continuous.
    \end{enumerate}
\end{proposition}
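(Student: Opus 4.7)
The plan is to dispatch the three items separately, exploiting throughout the fact that $\phi(x,y) = \sum_{i \in I} y_i f_i(x)$ is linear in $y$. The partials I will need are
$$\nabla_x \phi(x,y) = \sum_{i\in I} y_i \nabla f_i(x), \qquad \nabla_y \phi(x,y) = (f_1(x), \dots, f_N(x))^\top.$$

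For (a), I would expand $\langle F(z) - F(z'), z - z' \rangle$ directly for $z = (x,y)$, $z' = (x', y') \in \Hilbert \times \Delta^N$. After unrolling the $x$- and $y$-contributions and regrouping the cross terms by the multipliers, the expression rearranges into
$$\sum_{i\in I} y_i \bigl[f_i(x') - f_i(x) - \langle \nabla f_i(x), x' - x\rangle\bigr] + \sum_{i\in I} y'_i \bigl[f_i(x) - f_i(x') - \langle \nabla f_i(x'), x - x'\rangle\bigr].$$
Each bracketed quantity is nonnegative by convexity of $f_i$, and since $y, y' \in \Delta^N$ have nonnegative entries, the entire expression is nonnegative, yielding monotonicity.

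For (b), I would split $\|F(z) - F(z')\|^2$ into its $x$- and $y$-blocks. Inserting the intermediate point $(x, y')$ in the $x$-block, $\sum_i y_i (\nabla f_i(x) - \nabla f_i(x'))$ is controlled by $L\|x-x'\|$ via Jensen's inequality (as $y \in \Delta^N$ is a probability vector), and $\sum_i (y_i - y'_i)\nabla f_i(x')$ by $M\sqrt{N}\|y-y'\|$ through Cauchy--Schwarz combined with $\|\nabla f_i\| \leq M$. For the $y$-block, integrating $\nabla f_i$ along the segment from $x'$ to $x$ (using convexity of $K$) gives $|f_i(x) - f_i(x')| \leq M\|x-x'\|$, hence $\sum_i (f_i(x) - f_i(x'))^2 \leq N M^2 \|x-x'\|^2$. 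Summing the squared bounds and absorbing the cross term with Young's inequality delivers $\|F(z) - F(z')\|^2 \leq \overline{L}^2 \|z-z'\|^2$ with $\overline{L} = \sqrt{L^2 + 2 M^2 N}$.

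Finally, (c) follows by reduction to (b). For any $(x_0, y_0) \in \Hilbert \times \Delta^N$, the intersection of the finitely many local Lipschitz neighbourhoods of the $\nabla f_i$ at $x_0$ provides a bounded convex neighbourhood $\mathcal{N}$ of $x_0$ on which every $\nabla f_i$ is Lipschitz with a common constant (the maximum) and bounded by a common constant (from continuity on the compact closure $\overline{\mathcal{N}}$). Applying (b) on $\mathcal{N} \times \Delta^N$, which is a neighbourhood of $(x_0, y_0)$ since $\Delta^N$ is already compact, then gives the claimed local Lipschitz property. The main obstacle is spotting the regrouping in (a) that turns $\langle F(z) - F(z'), z - z'\rangle$ into a nonnegatively weighted sum of Bregman-like increments; once this is done, (b) is a routine triangle-inequality computation and (c) is just a compactness argument.
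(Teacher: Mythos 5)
Your proposal is correct, and parts (b) and (c) follow essentially the same route as the paper: split $F(z)-F(z')$ into its $x$- and $y$-blocks, insert an intermediate point to separate the variation in $x$ from the variation in $y$, bound the pieces by $L\|x-x'\|$ and $M\sqrt{N}\|y-y'\|$ (resp.\ $M\sqrt{N}\|x-x'\|$ for the $y$-block), and combine via Young's inequality; (c) is then the same compactness reduction the paper uses. One small remark on the constant: with the plain inequality $(a+b)^2\leq 2a^2+2b^2$ your bookkeeping yields $(2L^2+M^2N)\|x-x'\|^2+2M^2N\|y-y'\|^2$, i.e.\ $\overline{L}=\sqrt{2L^2+2M^2N}$ rather than $\sqrt{L^2+2M^2N}$; the paper's own display has the same slack (it silently drops a factor of $2$ on the $L^2$ term), and a weighted Young inequality $(a+b)^2\leq(1+\epsilon)a^2+(1+\epsilon^{-1})b^2$ with $\epsilon=M^2N/(L^2+M^2N)$ recovers the stated constant exactly, so this is cosmetic either way. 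Where you genuinely diverge is part (a): the paper simply invokes Rockafellar's result on monotonicity of saddle operators of convex--concave functions, whereas you expand $\langle F(z)-F(z'),z-z'\rangle$ directly and regroup it as
$\sum_{i}y_i\,D_{f_i}(x',x)+\sum_i y'_i\,D_{f_i}(x,x')$
with $D_{f_i}$ the Bregman divergence of $f_i$; this is a correct and fully self-contained argument that makes visible exactly where monotonicity comes from (convexity of each $f_i$ plus nonnegativity of $y,y'$ on the simplex), at the cost of being specific to the bilinear-in-$y$ structure of $\phi$ rather than applying to a general convex--concave coupling as the cited result does. Your (c) should also note, as you implicitly do via ``integrating along the segment,'' that deducing $|f_i(x)-f_i(x')|\leq M\|x-x'\|$ from the gradient bound needs $K$ convex (or at least connected with controlled geometry); the paper handles this by citation, and on the balls used in (c) it is automatic.
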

\begin{proof}
    (a) Since $\phi$ is convex-concave and finite everywhere, monotonicity of $F$ follows from~\cite[Corollary 2]{rockafellar1970monotone}.
    (b) We first note that $\|\nabla f_i(x)\|\leq M$ for all $x\in K$ implies that $f_i$ is $M$-Lipschitz \cite[Theorem 9.7]{rockafellar2009variational} on $K$. Let $x,x^\prime\in\Hilbert$ and $y,y^\prime\in\Delta^N$. Then, we have
        \begin{equation}\label{eq:4ineq}\begin{aligned}
        \|\nabla_x\phi(x,y) - \nabla_x\phi(x,y^\prime)\| 
        &= \left\|\sum_{i\in I}(y_i-y^\prime_i)\nabla f_i(x)\right\|\\
        &\leq \sum_{i\in I}\lvert y_i - y^\prime_i\rvert\|\nabla f_i(x)\|
        \leq M\|y-y^\prime\|_1\leq M\sqrt{N}\|y-y^\prime\|,\\
        \|\nabla_x\phi(x,y^\prime) - \nabla_x\phi(x^\prime,y^\prime)\| 
        &= \left\|\sum_{i\in I}y^\prime_i(\nabla f_i(x) - \nabla f_i(x^\prime))\right\|\\
        &\leq\sum_{i\in I}y^\prime_i\|\nabla f_i(x) - \nabla f_i(x^\prime)\|\leq L\|x-x^\prime\|,\\
        \|\nabla_y\phi(x,y) - \nabla_y\phi(x,y^\prime)\| &= \|(f_i(x)-f_i(x))_{i\in I}\| = 0,\\
        \|\nabla_y\phi(x,y^\prime) - \nabla_y\phi(x^\prime,y^\prime)\| &= \|(f_i(x)-f_i(x^\prime))_{i\in I}\| \leq M\sqrt{N}\|x-x^\prime\|.
    \end{aligned}\end{equation}
    Denote $z=(x,y),z^\prime=(x^\prime,y^\prime)\in\Hilbert\times\Delta^N$. Then expanding the norm-squared and applying Young's inequality followed by \eqref{eq:4ineq} gives
    \begin{align*}
       \|F(z) - F(z^\prime)\|^2 
        &= \|\nabla_x\phi(x,y) - \nabla_x\phi(x^\prime,y^\prime)\|^2 + \|\nabla_y\phi(x,y) - \nabla_y\phi(x^\prime,y^\prime)\|^2\\
        &\leq 2\|\nabla_x\phi(x,y) - \nabla_x\phi(x,y^\prime)\|^2 + 2\|\nabla_x\phi(x,y^\prime) - \nabla_x\phi(x^\prime,y^\prime)\|^2\\ 
        &\quad + \|\nabla_y\phi(x,y) - \nabla_y\phi(x,y^\prime) + \nabla_y\phi(x,y^\prime) - \nabla_y\phi(x^\prime,y^\prime)\|^2\\ 
        &\leq L^2\|x-x^\prime\|^2 + 2M^2 N(\|x-x^\prime\|^2 + \|y-y^\prime\|^2) \leq \overline{L}^2\|z-z^\prime\|^2. 
    \end{align*}
    (c) Follows after applying (b) for any compact $K$, noting that $\nabla f_i$ is Lipschitz on such sets by assumption and locally bounded from continuity.
\end{proof}

In the setting of Proposition~\ref{prop:prop}, (global) Lipschitz continuity of the saddle operator $F$ is verified when both $f_i$ and $\nabla f_i$ are Lipschitz continuous for all $i\in I$. This is too restrictive to be applicable in many important settings. Moreover, Proposition~\ref{prop:prop} gives some information on this local curvature. In particular, we observe the local Lipschitz constant is more dependent on to the local Lipschitz/boundedness of $\nabla f_i$, than on the number $N$ of subfunctions.

\section{Active Set Identification}\label{sec:identification}

In this section, we collect and establish results on active support identification for the nonsmooth minimisation problem~\eqref{eq:minmax} and the smooth min-max formulation~\eqref{eq:saddle}. We will make use of the following assumptions.






\begin{assumption}\label{assumptions} In the context of~\eqref{eq:minmax}, consider the following properties.
\begin{enumerate}[label=A.\arabic*]
    \item\label{ass:nondegen} $x^*\in\Hilbert$ is a non-degenerate minimiser of $f$, that is, $0\in\relint\partial f(x^*)$.
    \item\label{ass:SMFCQ} $\Lambda(x^*)$ is a singleton.
\end{enumerate}
\end{assumption}

Throughout this section, we consider a pair of sequences  $((x^k,y^k))\subset\Hilbert\times\Delta^N$ converging to a saddle point $(x^*,y^*)$ of $\phi$ on $\Hilbert\times \Delta^N$.

\subsection{Direct Identification via the Saddle Reformulation}

\begin{theorem}\label{thm:subgradient}
Consider $f=\max\{f_1,\dots,f_N\}$ as in~\eqref{eq:minmax}, and let $(x^k)$ be a sequence converging to $x^*\in\argmin f$. If there exists $k_0\in\mathbb{N}$ such that
    $$I(x^k) = I(x^*)\text{~for all $k\geq k_0$},$$
    then $\dist(0,\partial f(x^k))\to 0$. Moreover, if \ref{ass:nondegen} and \ref{ass:SMFCQ} hold, then the reverse implication also holds.
\end{theorem}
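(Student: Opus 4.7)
The plan is to handle the two implications separately. The forward implication rests on continuity of the $\nabla f_i$, while the reverse uses a compactness argument together with the characterisations of non-degeneracy and uniqueness of $\Lambda(x^*)$ established in Theorems~\ref{thm:nondegeneracy} and~\ref{thm:SMFCQ}.

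For the forward direction, I would use Lemma~\ref{lem:subdiff}\ref{item:opt} to write $0\in\partial f(x^*)$ and then Lemma~\ref{lem:subdiff}\ref{item:max-rule} to extract coefficients $\alpha\in\Delta^{|I(x^*)|}$, supported on $I(x^*)$, with $\sum_{i\in I(x^*)}\alpha_i\nabla f_i(x^*)=0$. Under the assumption that $I(x^k)=I(x^*)$ for $k\geq k_0$, the same $\alpha$ furnishes the element $g^k:=\sum_{i\in I(x^*)}\alpha_i\nabla f_i(x^k)\in\partial f(x^k)$. By continuity of each $\nabla f_i$ at $x^*$ (a consequence of local Lipschitzness of the gradients), $g^k\to 0$, hence $\dist(0,\partial f(x^k))\leq\|g^k\|\to 0$.

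For the reverse direction, assume \ref{ass:nondegen}, \ref{ass:SMFCQ}, and $\dist(0,\partial f(x^k))\to 0$. The first (easy) step is to show $I(x^k)\subseteq I(x^*)$ eventually: if $i\notin I(x^*)$ then $f_i(x^*)<f(x^*)$, and continuity of all $f_j$ together with $x^k\to x^*$ forces $f_i(x^k)<f(x^k)$ for large $k$. The substantive step is the reverse containment, which I would prove by contradiction: if not, there is a subsequence along which some $j\in I(x^*)$ never belongs to $I(x^k)$, and, by pigeon-holing on the finitely many subsets of $I(x^*)$, a further subsequence along which $I(x^k)$ equals some fixed $J\subsetneq I(x^*)$.

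Along that subsequence, Lemma~\ref{lem:subdiff}\ref{item:max-rule} gives $\partial f(x^k)=\conv\{\nabla f_i(x^k):i\in J\}$, so the hypothesis $\dist(0,\partial f(x^k))\to 0$ produces convex coefficients $\alpha^k\in\Delta^{|J|}$ with $\|\sum_{i\in J}\alpha^k_i\nabla f_i(x^k)\|\to 0$. Compactness of $\Delta^{|J|}$ allows a further subsequence $\alpha^k\to\alpha^*\in\Delta^{|J|}$, and continuity of the $\nabla f_i$ at $x^*$ yields $\sum_{i\in J}\alpha^*_i\nabla f_i(x^*)=0$. Extending $\alpha^*$ by zeros to $\tilde y\in\Delta^N$ with $\mathrm{supp}(\tilde y)\subseteq J\subseteq I(x^*)$, the computation $\phi(x^*,\tilde y)=\sum_{i\in J}\tilde y_i f(x^*)=f(x^*)$ together with $\nabla_x\phi(x^*,\tilde y)=0$ shows, via Theorem~\ref{thm:reform}\ref{item:opt conditions}, that $\tilde y\in\Lambda(x^*)$. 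By \ref{ass:SMFCQ} and Theorem~\ref{thm:SMFCQ}, $\Lambda(x^*)=\{y^*\}$, so $\tilde y=y^*$. By \ref{ass:nondegen} and Theorem~\ref{thm:nondegeneracy}, $I^+(x^*)=I(x^*)$; but singleton-ness of $\Lambda(x^*)$ forces $I^+(x^*)=\{i:y^*_i>0\}=\mathrm{supp}(\tilde y)\subseteq J$, contradicting $J\subsetneq I(x^*)$.

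The main obstacle is the reverse implication: one must manufacture an element of $\Lambda(x^*)$ whose support is strictly smaller than $I(x^*)$. The compactness/continuity extraction of $\alpha^*$ is routine, and the real content is recognising that the two assumptions \ref{ass:nondegen} and \ref{ass:SMFCQ}, together, preclude any such element, so the proof pivots on simultaneous use of Theorems~\ref{thm:nondegeneracy} and~\ref{thm:SMFCQ}.
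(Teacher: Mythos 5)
Your proposal is correct and follows essentially the same route as the paper's proof: the identical convex-combination construction $\sum_{i\in I(x^*)}\alpha_i\nabla f_i(x^k)$ for the forward direction, and for the reverse direction the same pigeonhole-on-finitely-many-subsets plus compactness-of-the-simplex extraction of an element of $\Lambda(x^*)$ supported on a recurring index set, finished off by combining \ref{ass:SMFCQ} (via Theorem~\ref{thm:SMFCQ}) with \ref{ass:nondegen} (via Theorem~\ref{thm:nondegeneracy}). The only cosmetic difference is that you frame the last step as a contradiction with a proper subset $J\subsetneq I(x^*)$, whereas the paper argues directly that every infinitely-recurring index set $I^\prime$ must equal $I(x^*)$.
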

\begin{proof}
$(\implies)$ Consider $y^*\in\Lambda(x^*)$, so that $0=\sum_{i\in I(x^*)}y^*_i\nabla f_i(x^*)$ from Theorem~\ref{thm:reform}\ref{item:opt conditions}, and consider the sequence given by
$\xi^k := \sum_{i\in I(x^*)}y^*_i\nabla f_i(x^k)$. From continuity of $\nabla f_i$, it follows that $\xi^k\to 0$ as $k\to\infty$. Let $k\geq k_0$, so that $I(x^*)=I(x^k)$. The result is given by applying the max rule found in Lemma~\ref{lem:subdiff}\ref{item:max-rule} as follows
$$\xi^k\in\conv\{\nabla f_i(x^k)~|~i\in I(x^*)\} = \conv\{\nabla f_i(x^k)~|~i\in I(x^k)\}=\partial f(x^k),$$
which implies $\dist(0,\partial f(x^k))\leq \|\xi^k\|\to 0$, thus completing the forward direction.

$(\impliedby)$ Since $\range I(\cdot)$ is finite, there exists an index set $I^\prime\subseteq I$ such that $I(x^k)=I^\prime$ for infinitely many $k\in\mathbb{N}$. So there is a subsequence $(x^{k_j})_{j\in\mathbb{N}}$ such that $I(x^{k_j}) = I^\prime$ for all $j\in \mathbb{N}$.

Let $r\in I^\prime$ be arbitrary. Then $f_r(x^{k_j}) = f(x^{k_j})$ for all $j\in \mathbb{N}$, and hence also in the limit $f_r(x^*) = f(x^*)$ from continuity of $f_r$ and $f$. This shows that $r\in I(x^*)$, and therefore  $I^\prime\subseteq I(x^*)$. 

Now, since $\dist(0,\partial f(x^{k_j}))\to 0$, there exists a sequence of subgradients $\xi^{k_j}\in\partial f(x^{k_j})$ such that $\|\xi^{k_j}\|\to 0$. Also, by Lemma~\ref{lem:subdiff}\ref{item:max-rule}, there exists a corresponding sequence $y^{k_j}\in\Delta^N$ such that $\xi^{k_j}=\sum_{i\in I^\prime}y^{k_j}\nabla f_i(x^{k_j})$. Since $(y^{k_j})$ is bounded, we can suppose without loss of generality that $y^{k_j}$ converges to some $y^*\in\Delta^N$ as $j\to\infty$ (otherwise choose a new subsequence). Note that, for $i\notin I^\prime$, we have $y^*_i=\lim_{j\to\infty}y^{k_j}_i=0$. Therefore
$$0=\lim_{j\to\infty}\xi^{k_j}=\lim_{j\to\infty}\sum_{i\in I^\prime}y^{k_j}_i\nabla f_i(x^{k_j}) = \sum_{i\in I^\prime} y^*_i\nabla f_i(x^*)=\sum_{i\in I}y^*_i\nabla f_i(x^*).$$
Also, note that $f(x^*)=f_i(x^*)$ for all $i\in I^\prime$, then
$$\sum_{i\in I^\prime}y^*_i f_i(x^*) = \sum_{i\in I^\prime}y^*_i f(x^*) = f(x^*).$$
Therefore, it follows from Theorem~\ref{thm:reform}\ref{item:opt conditions} that $y^*\in\Lambda(x^*)$.
Then, since $\Lambda(x^*)=\{y^*\}$ under~\ref{ass:SMFCQ}, we have 
$$I^+(x^*)=\{i\in I\colon \exists~y\in\Lambda(x^*)\text{~s.t.}~y_i>0\}=\{i\in I\colon y^*_i>0\}\subseteq I^\prime.$$
By invoking Theorem~\ref{thm:nondegeneracy} under~\ref{ass:nondegen}, we then conclude that $I(x^*)=I^+(x^*)\subseteq I^\prime$. 

Altogether, we have established that $I^\prime=I(x^*)$. Since $I^\prime$ was chosen arbitrarily (as an index set satisfying $I(x^k)=I^\prime$ for infinitely many $k\in\mathbb{N})$, there exists $k_0\in\mathbb{N}$ such that $I(x^k)=I(x^*)$ for all $k\geq k_0$. This completes the proof.


\end{proof}

\begin{remark}
While we can achieve some level of support identification without either condition in Assumption~\ref{assumptions}, the result shown in Theorem~\ref{thm:subgradient} only holds under both of these conditions. In the absence of both~\ref{ass:SMFCQ} and~\ref{ass:nondegen}, we can identify at least one subset of indicies $I^\prime\subseteq I(x^*)$ such that $0\in\conv\{\nabla f_i(x^*)~|~i\in I^\prime\}$. If~\ref{ass:SMFCQ} holds but not~\ref{ass:nondegen}, then we can derive
$$I^+(x^*)\subseteq I(x^k)\subseteq I(x^*)\text{~for all $k\geq k_0$}.$$
In either case, the obtained result would still be useful for simplifying the problem as desired, though we have decided to keep all assumptions for simplicity.
\end{remark}
On its own, Theorem~\ref{thm:subgradient} is generally not applicable in the context of first-order methods, since it is difficult to ensure whether a sequence $(x^k)$ satisfies the necessary and sufficient condition $\dist(0,\partial f(x^k))\to 0$. This is due to the continuity properties of $\partial f$. Indeed, $\partial f$ is \textit{outer semicontinuous} \cite[Theorem 24.4]{rockafellar1997convex} at $x^*$, that is,
$$x^k\to x^*,~\xi^k\in\partial f(x^k)\implies \xi^k\to \xi^*\in\partial f(x^*),$$
but can fail to be \textit{inner semi continuous} at $x^*$, that is,
$$\forall\xi^*\in\partial f(x^*),~\exists x^k\to x^*,~\xi^k\in\partial f(x^k)\text{~s.t.~}\xi^k\to\xi^*.$$
Therefore it is not guaranteed that $\dist(0,\partial f(x^k))\to 0$ as $k\to\infty$.

Instead, one could consider a ``shadow sequence'' of $(x^k)$ for which this condition does apply, one basic example being the proximal operator $(\prox_{\lambda f}(x^k))$ for some fixed $\lambda>0$. Since $\frac{1}{\lambda}(x^k-\prox_{\lambda f}(x^k)) \in \partial f(\prox_{\lambda f}(x^k))$, we have $$\dist(0, \partial f(\prox_{\lambda f}(x^k)))\leq\frac{1}{\lambda}\|x^k-\prox_{\lambda f}(x^k)\|\to 0.$$ However, the class of functions $f$ for which $\prox_{\lambda f}$ is available is not very interesting, since we could simply minimise $f$ directly.

The $\varepsilon$-subdifferential, unlike the standard differential operator, is inner semicontinuous (see \cite[Section XI.4.1]{Hiriart-Urruty1993}) in the following sense

$$\forall \xi^*\in\partial f(x^*),~\exists x^k\to x^*,~\varepsilon_k\to 0,~\xi^k_{\varepsilon}\in\partial_{\varepsilon_k}f(x^k)\quad\text{s.t.~}\xi^k_{\varepsilon}\to\xi^*.$$

The following results will therefore describe how we can instead use the use the $\varepsilon$-subdifferential to approximate $I(x^*)$.

\begin{theorem}\label{thm:eps-subdiff}
Consider $f=\max\{f_1,\dots,f_N\}$ as in~\eqref{eq:minmax}, and let $(x^k)$ be a sequence converging to $x^*\in\argmin f$. Suppose \ref{ass:nondegen} and \ref{ass:SMFCQ} hold. If $\dist(0,\partial_{\varepsilon_k} f(x^k))\to 0$ for some positive sequence $\varepsilon_k\to 0$, and $p>1$, then there exists $k_0\in\mathbb{N}$ such that
    $$I_{\text{eps}}(x^k) := \left\{i\in I\colon f(x^k)-f_i(x^k)\leq \varepsilon_k^{(p-1)/p}\right\}=I(x^*)\text{~for all~} k\geq k_0$$
\end{theorem}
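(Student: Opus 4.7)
My plan is to prove the set equality $I_{\text{eps}}(x^k) = I(x^*)$ by establishing both inclusions separately, using the Br{\o}ndsted--Rockafellar Theorem (Theorem~\ref{thm:BR}) to reduce the $\varepsilon$-subdifferential hypothesis to the subgradient setting handled by Theorem~\ref{thm:subgradient}.

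The inclusion $I_{\text{eps}}(x^k) \subseteq I(x^*)$ is a direct continuity argument. For $i \notin I(x^*)$, we have $c := f(x^*) - f_i(x^*) > 0$, so by continuity $f(x^k) - f_i(x^k) \to c$ while $\varepsilon_k^{(p-1)/p} \to 0$. Hence for $k$ large enough $f(x^k) - f_i(x^k) > \varepsilon_k^{(p-1)/p}$, which means $i \notin I_{\text{eps}}(x^k)$.

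For the harder inclusion $I(x^*) \subseteq I_{\text{eps}}(x^k)$, the plan is to build a shadow sequence to which Theorem~\ref{thm:subgradient} applies. Select $\xi^k \in \partial_{\varepsilon_k} f(x^k)$ with $\|\xi^k\| \to 0$ (possible by hypothesis), and apply Br{\o}ndsted--Rockafellar with a parameter $\delta_k > 0$ (to be chosen) to obtain points $u^k$ and $\nu^k \in \partial f(u^k)$ satisfying $\|u^k - x^k\| \leq \varepsilon_k/\delta_k$ and $\|\nu^k - \xi^k\| \leq \delta_k$. Taking $\delta_k = \varepsilon_k^{\alpha}$ for a fixed $\alpha \in (0, 1/p)$, one gets $\delta_k \to 0$ (so $\|\nu^k\| \to 0$) and $\varepsilon_k / \delta_k = \varepsilon_k^{1-\alpha} \to 0$ (so $u^k \to x^*$). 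Theorem~\ref{thm:subgradient}, under \ref{ass:nondegen} and \ref{ass:SMFCQ}, then yields $I(u^k) = I(x^*)$ for all large $k$. For $i \in I(x^*) = I(u^k)$ we have $f_i(u^k) = f(u^k)$, and by local Lipschitz continuity of $f$ and each $f_i$ near $x^*$ (a consequence of convexity on a finite-dimensional space) with common constant $L$,
$$f(x^k) - f_i(x^k) \leq |f(x^k) - f(u^k)| + |f_i(u^k) - f_i(x^k)| \leq 2L\|x^k - u^k\| \leq 2L \varepsilon_k^{1-\alpha}.$$

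The main obstacle is calibrating $\delta_k$ so that this last bound lies below $\varepsilon_k^{(p-1)/p}$ asymptotically, even though $L$ is not known a priori. The restriction $\alpha < 1/p$ gives $1 - \alpha > (p-1)/p$, hence $\varepsilon_k^{1-\alpha} = o\bigl(\varepsilon_k^{(p-1)/p}\bigr)$ and $2L \varepsilon_k^{1-\alpha} \leq \varepsilon_k^{(p-1)/p}$ for all $k$ sufficiently large. This places every $i \in I(x^*)$ into $I_{\text{eps}}(x^k)$ for large $k$ and completes the plan. The hypothesis $p > 1$ is used precisely here: it guarantees an exponent gap between $\varepsilon_k / \delta_k$ and the threshold $\varepsilon_k^{(p-1)/p}$ that is wide enough to absorb the unknown Lipschitz constant in the limit.
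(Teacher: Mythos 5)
Your proposal is correct and follows essentially the same route as the paper's proof: continuity for the inclusion $I_{\text{eps}}(x^k)\subseteq I(x^*)$, and Br{\o}ndsted--Rockafellar applied to a shadow sequence $(u^k)$ to which Theorem~\ref{thm:subgradient} applies, followed by a local Lipschitz estimate, for the reverse inclusion. The only difference is cosmetic: the paper takes $\delta_k=2L\varepsilon_k^{1/p}$ so that $2L\varepsilon_k/\delta_k$ equals the threshold $\varepsilon_k^{(p-1)/p}$ exactly, whereas you take $\delta_k=\varepsilon_k^{\alpha}$ with $\alpha\in(0,1/p)$ and absorb the unknown constant $2L$ asymptotically via the strict exponent gap --- both are valid since the conclusion is only claimed for $k$ sufficiently large.
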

\begin{proof}
     First, we note that since $f_i$ is convex, it is locally Lipschitz (see \cite[Theorem 4.1.3]{Borwein2006}). In particular, $f_i$ is $L$-Lipschitz on a neighbourhood $\mathcal{N}$ of $x^*$. 
     It follows that $f$ is also $L$-Lipschitz (see \cite[Proposition 9.10]{rockafellar2009variational}) on $\mathcal{N}$.

     Now, to show the $(\subseteq)$ inclusion, take $r\notin I(x^*)$. Then $f(x^*)-f_r(x^*)>0$ and $\varepsilon_k^{(p-1)/p}\to 0$, so $f(x^k)-f_r(x^k)>\varepsilon_k^{(p-1)/p}$ for $k$ sufficiently large by continuity, which implies $r\not\in I_{\rm eps}(x^*)$. From this, we conclude that $I_{\rm eps}(x^k)\subseteq I(x^*)$ for all sufficiently large $k$.

     For the $(\supseteq)$ direction we first take $\xi^k\in\partial_{\varepsilon_k} f(x^k)$ such that $\|\xi^k\|\to 0$ as $k\to\infty$. We then deduce from Theorem~\ref{thm:BR} that for any $\delta_k>0$, there exists $u^k\in\Hilbert$ and $\nu^k\in\partial f(u^k)$ such that
    $$\|u^k-x^k\|\leq\frac{\varepsilon_k}{\delta_k}, \text{~and~} \|\nu^k-\xi^k\|\leq \delta_k,$$
    for all $k\in\mathbb{N}$. Let $\delta_k=2L\varepsilon_k^{1/p}\to 0$, so that $\frac{\varepsilon_k}{\delta_k}=\frac{1}{2L}\varepsilon_k^{(p-1)/p}\to 0$ as $k\to\infty$. Then, since $\|\nu^k-\xi^k\|\leq\delta_k\to 0$, it follows that $\dist(0,\partial f(u^k))\leq\|\nu^k\|\to 0$ as $k\to\infty$. Therefore by applying Theorem~\ref{thm:subgradient}, which holds under \ref{ass:nondegen} and \ref{ass:SMFCQ}, to $(u^k)$, we deduce the existence of $k_0\in\mathbb{N}$ such that $I(u^k)=I(x^*)$ for all $k\geq k_0$. Let $i\in I(x^*)$ be arbitrary, so that $f_i(u^k) = f(u^k)$ for all $k\geq k_0$. Note that $(u^k)$ and $(x^k)$ both converge to $x^*$, so by increasing $k_0$ if necessary, we have $u^k,x^k\in\mathcal{N}$ for $k\geq k_0$. Then by applying the Lipschitz inequality, we observe that
    \begin{align*}
        f(x^k)-f_i(x^k) &= f(x^k)-f(u^k)+f_i(u^k)-f_i(x^k) \\
        &\leq\lvert f(x^k)-f(u^k)\rvert+\lvert f_i(u^k)-f_i(x^k)\rvert\\
        &\leq 2L\|x^k-u^k\|\leq 2L\frac{\varepsilon_k}{\delta_k} = \varepsilon^{(p-1)/p}.
    \end{align*}
    Therefore $i\in I_{\rm eps}(x^k,y^k)$, which shows that $I(x^*)\subseteq I_{\rm eps}(x^k,y^k)$ for $k\geq k_0$, which completes the proof.
\end{proof}

In order to apply Theorem~\ref{thm:eps-subdiff}, knowledge of the sequence $(\varepsilon_k)$ such that ${\dist(0,\partial_{\varepsilon_k}f(x^k))\to0}$ is required. In the context of the saddle reformulation \eqref{eq:saddle}, this is given by the following result.


\begin{corollary}\label{cor:eps-subdiff}
Consider $f=\max\{f_1,\dots,f_N\}$ and $\phi$ as in~\eqref{eq:minmax} and~\eqref{eq:saddle}, respectively. Let $\bigl((x^k,y^k)\bigr)\subseteq\Hilbert\times\Delta^N$ be a sequence converging to a saddle point $(x^*,y^*)$ of $\phi$ on $\Hilbert\times\Delta^N$, and let $(\varepsilon_k)$ be the sequence given by $\varepsilon_k:=f(x^k)-\phi(x^k,y^k)\geq 0$. Then $\varepsilon_k\to 0$ and
\begin{equation}\label{eq:e-subdiff-saddle}\nabla_x\phi(x^k,y^k)\in\partial_{\varepsilon_k}f(x^k) \quad\forall k\in\mathbb{N}.
\end{equation}
Moreover, if \ref{ass:nondegen} and~\ref{ass:SMFCQ} hold, then for any $p>1$ there exists $k_0\in\mathbb{N}$ such that 
\begin{equation}\label{eq:Ieps-saddle}
I_{\text{eps}}(x^k, y^k) := \left\{i\in I\colon f(x^k)-f_i(x^k)\leq\varepsilon_k^{(p-1)/p}\right\}=I(x^*)\text{~for all~} k\geq k_0.
\end{equation}
\end{corollary}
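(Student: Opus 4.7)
}

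The plan is to verify the three assertions in sequence, essentially reducing each one to a fact already established in the paper. First I would check that $\varepsilon_k \to 0$. Since $y^k \in \Delta^N$ we have $f(x^k) = \max_{y \in \Delta^N} \phi(x^k, y) \geq \phi(x^k, y^k)$, so $\varepsilon_k \geq 0$. Using continuity of $f$ and $\phi$, together with Theorem~\ref{thm:reform} (which tells us $f(x^*) = \phi(x^*, y^*)$ at any saddle point), I would pass to the limit in $\varepsilon_k = f(x^k) - \phi(x^k, y^k)$ to get $\varepsilon_k \to f(x^*) - \phi(x^*, y^*) = 0$.

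Next I would establish the inclusion \eqref{eq:e-subdiff-saddle}. The key observation is that $\phi(\cdot, y^k)$ is a convex combination of the convex functions $f_i$, hence convex, and it is majorised pointwise by $f$ since $f(x) = \max_{y \in \Delta^N} \phi(x, y) \geq \phi(x, y^k)$ for every $x \in \Hilbert$. Combining the convexity inequality
$$\phi(x, y^k) \geq \phi(x^k, y^k) + \langle \nabla_x \phi(x^k, y^k), x - x^k\rangle$$
with $f(x) \geq \phi(x, y^k)$ and rearranging using the definition of $\varepsilon_k$ yields
$$f(x) - f(x^k) - \langle \nabla_x \phi(x^k, y^k), x - x^k \rangle + \varepsilon_k \geq 0 \quad \forall x \in \Hilbert,$$
which is precisely the defining inequality of $\partial_{\varepsilon_k} f(x^k)$.

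For the final assertion, I want to invoke Theorem~\ref{thm:eps-subdiff} with the sequence $\varepsilon_k$ defined above. It remains only to show $\dist(0, \partial_{\varepsilon_k} f(x^k)) \to 0$. From \eqref{eq:e-subdiff-saddle} we have the bound $\dist(0, \partial_{\varepsilon_k} f(x^k)) \leq \|\nabla_x \phi(x^k, y^k)\|$. Since $(x^*, y^*)$ is a saddle point, the first-order condition~\eqref{eq:first-order saddle} gives $\nabla_x \phi(x^*, y^*) = 0$; continuity of $\nabla_x \phi$ (each $\nabla f_i$ is continuous, and $\nabla_x \phi(x, y) = \sum_i y_i \nabla f_i(x)$) then yields $\nabla_x \phi(x^k, y^k) \to 0$. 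Hence $\dist(0, \partial_{\varepsilon_k} f(x^k)) \to 0$, and Theorem~\ref{thm:eps-subdiff} applies, delivering \eqref{eq:Ieps-saddle}.

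The argument is largely bookkeeping; the only mildly delicate step is the $\varepsilon$-subgradient inclusion in the second paragraph, where one has to recognise that the correct slack is exactly $\varepsilon_k = f(x^k) - \phi(x^k, y^k)$ so that the convexity inequality for $\phi(\cdot, y^k)$ sandwiched against $f$ lines up perfectly with the definition of $\partial_{\varepsilon_k} f(x^k)$. Once this identity is in hand, the rest follows by combining continuity of $\nabla_x \phi$ with Theorem~\ref{thm:eps-subdiff}.
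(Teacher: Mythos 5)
Your proposal is correct and follows essentially the same route as the paper: establish the $\varepsilon$-subgradient inclusion via the convexity of $\phi(\cdot,y^k)$ together with $\phi(\cdot,y^k)\leq f$ (the paper writes this same inequality termwise over the $f_i$), deduce $\varepsilon_k\to 0$ and $\dist(0,\partial_{\varepsilon_k}f(x^k))\leq\|\nabla_x\phi(x^k,y^k)\|\to 0$ from continuity and Theorem~\ref{thm:reform}\ref{item:opt conditions}, and then invoke Theorem~\ref{thm:eps-subdiff}. No gaps.
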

\begin{proof}
Let $z\in\Hilbert$ be arbitrary. Using convexity of $f_i$ and $(y^k)\subseteq\Delta^N$, it follows that
   \begin{align*}
       \langle\nabla_x\phi(x^k,y^k),z-x^k\rangle -\varepsilon_k &= \sum_{i\in I} y^k_i\langle\nabla f_i(x^k),z-x^k\rangle - \varepsilon_k\\
       &\leq \sum_{i\in I} y^k_i(f_i(z) - f_i(x^k)) - \varepsilon_k\\
       &\leq f(z) - f(x^k) + f(x^k) - \phi(x^k,y^k) - \varepsilon_k \\
       &= f(z) - f(x^k),
   \end{align*}
which establishes \eqref{eq:e-subdiff-saddle}. By  Theorem~\ref{thm:reform}\ref{item:opt}, we have $\nabla_x\phi(x^*,y^*)=0$ and $f(x^*)-\phi(x^*,y^*)=0$. Hence, due to continuity of $f$, $\phi$ and $\nabla_x\phi$, it follows that $\varepsilon_k=f(x^k)-\phi(x^k,y^k)\to 0$  and$$\dist(0,\partial_{\varepsilon_k}f(x^k))\leq\|\nabla_x\phi(x^k,y^k)\|\to 0\text{~as~}k\to\infty. $$
Moreover, when  \ref{ass:nondegen} and~\ref{ass:SMFCQ} hold, \eqref{eq:Ieps-saddle} then follows by applying Theorem~\ref{thm:eps-subdiff}.
\end{proof}

\subsection{Identification Functions}

In this section, we discuss another approach to support identification based on the results from~\cite{facchinei1998accurate, oberlin2006active}. This will provide an alternative to identifying the sets $I(x^*)$ and $I^+(x^*)$, which appear in Section~\ref{sec:exp} alongside the set from Corollary~\ref{cor:eps-subdiff}. Since $I(x^k)$ cannot be used to identify $I(x^*)$ unless $\dist(0,\partial f(x^k))\to 0$, the general approach taken in this section is to instead take some tolerance $\sigma>0$ and consider the set $\{i\in I\colon f(x^k) - f_i(x^k)\leq\sigma\}$. From the definition, if we take a sufficiently small $\sigma$ (eg, $\sigma<\min_{j\notin I(x^*)}\{f(x^*)-f_j(x^*)\}$), then for $k$ sufficiently large we would have $f(x^k)-f_i(x^k)<\sigma$ for $i\in I(x^*)$ and $f(x^k)-f_j(x^k)>\sigma$ for $j\notin I(x^*)$ as a result of continuity. However it is unclear how to determine such a tolerance \textit{a priori}.

\begin{theorem}\label{thm:oplus}
Consider $f=\max\{f_1,\dots,f_N\}$ and $\phi$ as in~\eqref{eq:minmax} and~\eqref{eq:saddle}, respectively. Let 
\begin{equation}\label{eq:Iplus}
I_{\oplus}(x,y) := \{i\in I\colon f(x)-f_i(x)\leq y_i\}.
\end{equation}There exists a neighbourhood $\mathcal{N}$ of a saddle point $(x^*,y^*)$ of $\phi$ on $\Hilbert\times\Delta^N$ such that the following assertions hold.
    \begin{enumerate}[label=(\alph*)]
        \item\label{item:a} $I_{\oplus}(x,y)\subseteq I(x^*)$ for all $(x,y)\in\mathcal{N}$.
        \item\label{item:b} If~\ref{ass:nondegen} holds and $(x^*,y^*)$ satisfying the strict complementarity condition: 
        \begin{equation}\label{eq:strict complementarity}
        y^*_i>0\iff f(x^*)=f_i(x^*)\quad\forall i\in I,
        \end{equation}
        then $I_{\oplus}(x,y) = I(x^*) = I^+(x^*)$ for all $(x,y)\in\mathcal{N}$.
        \item\label{item:c} If \ref{ass:SMFCQ} holds, then $I^+(x^*)\subseteq I_{\oplus}(x,y)\subseteq I(x^*)$ for all $(x,y)\in\mathcal{N}$.
    \end{enumerate}
\end{theorem}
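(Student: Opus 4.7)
The overall strategy is to exploit continuity of the involved functions together with the optimality structure recorded in Theorem~\ref{thm:reform}. In every case, we will partition the index set $I$ according to whether $i\in I(x^*)$ or not, and use the characterization of $y^*$ to control $y_i$ near $y^*_i$ while controlling $f(x)-f_i(x)$ near $f(x^*)-f_i(x^*)$. The guiding observation is that $I_{\oplus}(x,y)$ depends only on the sign of $f(x)-f_i(x)-y_i$, and both $f-f_i$ and the coordinate projection $y\mapsto y_i$ are continuous, so the set $I_{\oplus}$ is locally constant whenever the strict inequality $f(x^*)-f_i(x^*)\neq y^*_i$ holds for every index $i$.

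For part~\ref{item:a}, fix any $i\notin I(x^*)$. By Theorem~\ref{thm:reform} applied to the saddle point $(x^*,y^*)$, we have $y^*_i=0$, whereas $f(x^*)-f_i(x^*)>0$ by definition of $I(x^*)$. Setting $\alpha_i:=f(x^*)-f_i(x^*)>0$ and using continuity of $f-f_i$ and of the projection onto the $i$th coordinate, we obtain a neighbourhood $\mathcal{N}_i$ of $(x^*,y^*)$ on which $f(x)-f_i(x)>\alpha_i/2>y_i$, so $i\notin I_{\oplus}(x,y)$. Intersecting the finitely many neighbourhoods $\mathcal{N}_i$ over $i\notin I(x^*)$ gives a neighbourhood $\mathcal{N}_a$ on which $I_{\oplus}(x,y)\subseteq I(x^*)$.

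For part~\ref{item:b}, strict complementarity~\eqref{eq:strict complementarity} forces $y^*_i>0$ for every $i\in I(x^*)$; combined with Theorem~\ref{thm:reform} the support of $y^*$ coincides with $I(x^*)$, so via Theorem~\ref{thm:nondegeneracy} (which holds under \ref{ass:nondegen}) we also obtain $I(x^*)=I^+(x^*)$. For such an $i$ we have $f(x^*)-f_i(x^*)=0<y^*_i$, so by continuity there is a neighbourhood $\mathcal{N}_i'$ of $(x^*,y^*)$ on which $f(x)-f_i(x)<y_i$, giving $i\in I_{\oplus}(x,y)$. Intersecting $\mathcal{N}_a$ with the finitely many $\mathcal{N}_i'$ for $i\in I(x^*)$ produces the desired neighbourhood on which $I_{\oplus}(x,y)=I(x^*)=I^+(x^*)$.

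For part~\ref{item:c}, assumption \ref{ass:SMFCQ} forces $\Lambda(x^*)=\{y^*\}$, so from its definition $I^+(x^*)=\{i\in I\colon y^*_i>0\}$. Repeating the continuity argument of part~\ref{item:b}, restricted now to indices $i\in I^+(x^*)$ (where by Theorem~\ref{thm:reform} also $i\in I(x^*)$, so $f(x^*)-f_i(x^*)=0<y^*_i$), we find a neighbourhood on which every such $i$ lies in $I_{\oplus}(x,y)$. Combined with part~\ref{item:a}, we obtain $I^+(x^*)\subseteq I_{\oplus}(x,y)\subseteq I(x^*)$. The only mild subtlety throughout is bookkeeping between the three neighbourhoods, but since all quantifications over $i$ range over the finite set $I$, finite intersection is harmless; I do not anticipate any real obstacle beyond correctly invoking the results of the preceding section to identify $y^*_i$ with the appropriate structural information about $x^*$.
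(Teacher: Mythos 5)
Your proposal is correct and follows essentially the same route as the paper: for each index, compare $f(x^*)-f_i(x^*)$ with $y^*_i$ (using Theorem~\ref{thm:reform} to identify the support of $y^*$, and Theorem~\ref{thm:nondegeneracy} to relate strict complementarity to $I(x^*)=I^+(x^*)$), note the inequality is strict at the saddle point, and propagate it to a neighbourhood by continuity, intersecting over the finitely many indices. The only cosmetic difference is that in part (b) you run the continuity argument over $i\in I(x^*)$ directly, whereas the paper runs it over $i\in I^+(x^*)$ and then invokes the equality of the two sets; under strict complementarity these are the same argument.
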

\begin{proof}
    \ref{item:a} For $j\notin I(x^*)$, we have $f(x^*) - f_j(x^*)>0=y^*_j$. By continuity of $f$ and $f_j$, there exists a neighbourhood $\mathcal{N}$ of $(x^*,y^*)$ such that $f(x)-f_j(x)>y_j$ for all $(x,y)\in\mathcal{N}$. The latter implies $j\not\in I_{\oplus}(x,y)$ for all $(x,y)\in\mathcal{N}$, from which the result follows.
    \ref{item:b} For $i\in I^+(x^*)$, we have $f(x^*) - f_i(x^*) = 0 < y^*_i$. By continuity of $f$ and $f_i$, there exists a neighbourhood $\mathcal{N}$ of $(x^*,y^*)$ such that $f(x)-f_i(x)\leq y_i$ for all $(x,y)\in\mathcal{N}$ and hence $I^+(x^*)\subseteq I_{\oplus}(x,y)$. By Theorem~\ref{thm:nondegeneracy}, the strict complementarity condition~\eqref{eq:strict complementarity} is equivalent to $I(x^*)=I^+(x^*)$. Combining these two inclusions with part~\ref{item:a} establishes the result.
    \ref{item:c} Since $\Lambda(x^*)=\{y^*\}$ under \ref{ass:SMFCQ}, we have
    $$I^+(x^*) = \{i\in I\colon\exists~y^*\in\Lambda(x^*)\text{~s.t.~}y^*_i>0\}=\{i\in I\colon y^*_i>0\}.$$
    Let $i\in I^+(x^*)$, then $y^*_i>0=f(x^*) -f_i(x^*)$. Similarly to part~\ref{item:b}, we therefore have $I^+(x^*,y^*)\subseteq I_{\oplus}(x,y)$ for some neighbourhood $\mathcal{N}$ of $(x^*,y^*)$ and all $(x,y)\in\mathcal{N}$. Once again combining with part~\ref{item:a} completes the proof.
\end{proof}


Let $S$ denote the set of saddle points of $\phi$ on $\Hilbert\times\Delta^N$. A continuous function $\rho\colon\Hilbert\times\mathbb{R}^N\to\mathbb{R}_+$ is said to be an \textit{identification function}~\cite{facchinei1998accurate} for $S$ if $\rho(x^*,y^*)=0$ for all $(x^*,y^*)\in S$, and 
\begin{equation}\label{eq:limit}
\lim_{\substack{(x,y)\to(x^*,y^*)\in S\\ (x,y)\notin S}}~\frac{\rho(x,y)}{\dist((x,y), S)} = +\infty. 
\end{equation}
Informally, an identification function is a function $\rho\colon\Hilbert\times\Delta^N\to\mathbb{R}_+$ for which $\rho(x,y)$ converges to $0$ more slowly than $(x,y)$ converges to $(x^*,y^*)$. Generally, these functions can be constructed by taking a function with a (local) Lipschitz property, and raising it to some power between $0$ and $1$. This is illustrated in the following example.

\begin{example}[Identification functions]\label{ex:id funcs}
Let $\|\cdot\|$ be any norm and $0<\gamma<1$. Then the following are functions are  identification functions for $S$.
    \begin{enumerate}[(a)]
    \item $\rho_1(x,y) = \left(\left\|\sum_{i=1}^N y_i \nabla f_i(x)\right\| + f(x) - \phi(x,y)\right)^\gamma.$
    \item $\rho_2(x,y; \lambda) = \|P_{K} (x - \lambda F(x))\|^\gamma$, with $K$ and $F$ as defined as in~\eqref{eq:VIP}.
        \item $\rho_3(x,y) = \left\|\begin{pmatrix}
    \sum_{i=1}^N y_i\nabla f_i(x)\\
    y_1( f(x) - f_1(x) )\\
    \vdots\\
    y_N( f(x) - f_N(x) )
\end{pmatrix}\right\|^\gamma.$
    \end{enumerate}
\end{example}

In the previous section, we used the $\varepsilon$-subdifferential to define a dynamic tolerance that can be applied to support identification. The estimate used there (with $\varepsilon_k=f(x^k)-\phi(x^k,y^k)$) is not necessarily an identification function for two reasons: first, this may not satisfy the limit condition in~\eqref{eq:limit}; second, $\varepsilon_k=0$ on its own does not imply $(x^k,y^k)=(x^*,y^*)$. This is why the functions listed in Example~\ref{ex:id funcs} incorporate both terms from Theorem~\ref{thm:reform}\ref{item:opt conditions}.

Note that the following result does not require nondegeneracy.

\begin{theorem}
Consider $f=\max\{f_1,\dots,f_N\}$ and $\phi$ as in~\eqref{eq:minmax} and~\eqref{eq:saddle}, respectively, and suppose $x^*\in\argmin f$ is unique. Let $\rho$ be an identification function for $S$. Then, for all $y^*\in\Lambda(x^*)$, there exists a neighbourhood $\mathcal{N}$ of $(x^*,y^*)$ such that the following assertions hold.
    \begin{enumerate}[label=(\alph*)]
        \item\label{item:func ident} For all $(x,y)\in\mathcal{N}$, we have $$\mathcal{A}(x,y) := \{i\in \colon f(x) - f_i(x)\leq\rho(x,y)\} = I(x^*).$$
        \item\label{item:strong func ident} If~\ref{ass:SMFCQ} holds then, then, for all $(x,y)\in\mathcal{N}$, we have
        $$\mathcal{A}_+(x,y) := \{i\in I\colon f(x)-f_i(x)\leq\rho(x,y)\leq y_i\} = I^+(x^*).$$
    \end{enumerate}
\end{theorem}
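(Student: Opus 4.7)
The plan is to establish each inclusion in (a) and (b) by comparing the decay rate of $\rho(x,y)$ against those of $f(x)-f_i(x)$ and $y_i$ as $(x,y)\to(x^*,y^*)$, exploiting the defining growth property of an identification function. A preliminary observation: since $x^*$ is the unique minimiser of $f$ by hypothesis, every saddle point has first component $x^*$, so $S\subseteq\{x^*\}\times\Lambda(x^*)$ and $\dist((x,y),S)\geq\|x-x^*\|$ for all $(x,y)$.

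For part~\ref{item:func ident}, the inclusion $\mathcal{A}(x,y)\subseteq I(x^*)$ is a direct continuity argument: if $i\notin I(x^*)$ then $f(x^*)-f_i(x^*)>0=\rho(x^*,y^*)$, and joint continuity of $f$, $f_i$, $\rho$ yields a neighbourhood on which $f(x)-f_i(x)>\rho(x,y)$. For the reverse inclusion, fix $i\in I(x^*)$. Each $f_i$ is convex on the finite dimensional space $\Hilbert$, hence locally Lipschitz, and so is $f$; this supplies a common constant $L$ with $0\leq f(x)-f_i(x)\leq L\|x-x^*\|\leq L\,\dist((x,y),S)$ on some neighbourhood. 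Shrinking that neighbourhood via the identification property ensures $\rho(x,y)\geq L\,\dist((x,y),S)\geq f(x)-f_i(x)$ whenever $(x,y)\notin S$, while for $(x,y)\in S$ both sides vanish since $x=x^*$.

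For part~\ref{item:strong func ident}, under~\ref{ass:SMFCQ} the set $\Lambda(x^*)=\{y^*\}$ is a singleton, hence $S=\{(x^*,y^*)\}$ and $\dist((x,y),S)=\|(x,y)-(x^*,y^*)\|$. For $\mathcal{A}_+(x,y)\subseteq I^+(x^*)$: if $i\notin I^+(x^*)$ then $y^*_i=0$, so $y_i\leq\|y-y^*\|\leq\dist((x,y),S)$, and the identification property enforces $\rho(x,y)>\dist((x,y),S)\geq y_i$ on a punctured neighbourhood, excluding $i$ from $\mathcal{A}_+(x,y)$. For the reverse inclusion: if $i\in I^+(x^*)$ then $y^*_i>0$ gives $y_i\geq y^*_i/2$ by continuity, while $\rho(x,y)\to 0$ lets me also arrange $\rho(x,y)\leq y^*_i/2\leq y_i$. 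Since $I^+(x^*)\subseteq I(x^*)$ by Theorem~\ref{thm:reform}, part~\ref{item:func ident} then supplies $f(x)-f_i(x)\leq\rho(x,y)$, so $i\in\mathcal{A}_+(x,y)$.

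The main technical obstacle is correctly handling the qualifier $(x,y)\notin S$ in the definition of the identification function. For part~\ref{item:func ident} this is painless because on $S$ one has $x=x^*$, so the relevant quantities evaluate trivially. For part~\ref{item:strong func ident} the qualifier singles out the base point $(x^*,y^*)$, and one must verify separately (or restrict to a punctured neighbourhood) what happens there when~\ref{ass:nondegen} is not available, since otherwise indices $i\in I(x^*)\setminus I^+(x^*)$ would belong to $\mathcal{A}_+(x^*,y^*)$ and spoil the equality at the base point.
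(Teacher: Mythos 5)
Your argument follows essentially the same route as the paper's proof: local Lipschitz continuity of $f-f_i$ (from convexity) bounds $f(x)-f_i(x)$ by a multiple of $\dist((x,y),S)$ using uniqueness of $x^*$, the limit property of $\rho$ then dominates this for $i\in I(x^*)$ while continuity excludes $j\notin I(x^*)$, and in part (b) the comparison $\rho(x,y)\geq 2\|(x,y)-(x^*,y^*)\|\geq 2y_j$ versus $\rho(x,y)\to 0<y^*_i$ separates $I^+(x^*)$ from its complement. The base-point concern you raise at the end is genuine, and you are in fact more careful than the paper here: its proof asserts $\mathcal{A}_+(x^*,y^*)=I^+(x^*)$ ``since $\rho(x^*,y^*)=0$'', but the membership condition $\rho(x^*,y^*)\leq y^*_i$ then reads $0\leq y^*_i$, which holds for every $i$, so in fact $\mathcal{A}_+(x^*,y^*)=I(x^*)$; equality with $I^+(x^*)$ at the point $(x^*,y^*)$ itself therefore requires the nondegeneracy condition \ref{ass:nondegen} or restricting the claim to $\mathcal{N}\setminus\{(x^*,y^*)\}$, exactly as you suspect.
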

    \begin{proof}
     We include the prove of both parts for completeness, although the arguments similar to  \cite[Theorems 2.2/2.4]{facchinei1998accurate} respectively. \ref{item:func ident} First, we note that, due to convexity, $f_i$ is locally Lipschitz for all $i\in I$ (see \cite[Theorem 4.1.3]{Borwein2006}). Thus there exists a neighbourhood $\mathcal{N}_x$ of $x^*$ on which $f_i$ is $L$-Lipschitz. It follows that $f$ is $L$-Lipchitz (see \cite[Proposition 9.10]{rockafellar2009variational}), and $f-f_i$ is $2L$-Lipschitz (see \cite[Exercise 9.8(a)\&(b)]{rockafellar2009variational}) on the same neighbourhood.
     Therefore, for all $i\in I$ and $x\in\mathcal{N}_x$, we have
     \begin{equation}\label{eq:lip est}
         f(x) - f_i(x)\leq f(x^*)-f_i(x^*) + 2L\|x-x^*\|.
     \end{equation}
     Now let $i\in I(x^*)$. Since $\argmin f=\{x^*\}$, it follows that
     \begin{equation}\label{eq:lip est2}
         f(x) - f_i(x)\leq 2L\|x-x^*\|\leq 2L\sqrt{\|x-x^*\|^2 + \dist(y,\Lambda(x^*))^2}= 2L\dist((x,y), S), 
     \end{equation}
     for all $(x,y)\in\mathcal{N}_x\times\Delta^N$. Now, as a result of \eqref{eq:limit}, there exists a neighbourhood $\mathcal{N}\subseteq\mathcal{N}_x\times\Delta^N$ of $(x^*,y^*)$ such that $\rho(x,y)\geq 2L\dist((x,y), S)$ for all $(x,y)\in\mathcal{N}$. Combining this with \eqref{eq:lip est2} shows that $i\in\mathcal{A}(x,y)$, and hence we conclude that $I(x^*)\subseteq\mathcal{A}(x,y)$.
     Conversely, for $j\notin I(x^*)$, $f(x^*)-f_j(x^*)>0$. Then, by continuity of $f$ and $f_i$, there exists a neighbourhood $\mathcal{N}$ of $(x^*,y^*)$ such that $f(x)-f_j(x)>0$ for all $(x,y)\in\mathcal{N}$. This shows that $j\not\in\mathcal{A}(x,y)$ and hence $\mathcal{A}(x,y)\subseteq I(x^*)$.
     \ref{item:strong func ident} Let $i\in I^+(x^*)$. Since $\Lambda(x^*)=\{y^*\}$ under \ref{ass:SMFCQ}, we have
    $$I^+(x^*) = \{i\in I\colon\exists~y^*\in\Lambda(x^*)\text{~s.t.~}y^*_i>0\}=\{i\in I\colon y^*_i>0\}.$$ Since $\rho(x,y)\to 0$ as $(x,y)\to(x^*,y^*)$, there exists a neighbourhood $\mathcal{N}$ of $(x^*,y^*)$ such that $\rho(x,y)\leq y_i$ for all $(x,y)\in\mathcal{N}$. Using~\ref{item:func ident} and shrinking the neighbourhood $\mathcal{N}$ if necessary, we deduce that $f(x)-f_i(x)\leq\rho(x,y)$. Putting this altogether gives
     $$ f(x)-f_i(x)\leq \rho(x,y)\leq y_i \quad\forall (x,y)\in\mathcal{N}.$$
     Thus $i\in\mathcal{A}_+(x,y)$, and hence we conclude that $I^+(x^*)\subseteq\mathcal{A}_+(x,y)$. 
     Conversely, let $j\notin I^+(x^*)$. As a result of~\eqref{eq:limit}, there exists a neighbourhood $\mathcal{N}$ of $(x^*,y^*)$ such that
     $$\rho(x,y)\geq 2\|(x,y)-(x^*,y^*)\|\quad\forall (x,y)\in\mathcal{N}.$$
     Then, since $y^*_j=0$ due to \ref{ass:SMFCQ}, we have
     $$y_j=\lvert y_j-y^*_j\rvert\leq \|(x,y)-(x^*,y^*)\|<\rho(x,y) \quad\forall (x,y)\in\mathcal{N}\setminus\{(x^*,y^*)\}.$$
     Thus $j\not\in\mathcal{A}_+(x,y)$ when $(x,y)\in\mathcal{N}\setminus\{(x^*,y^*)\}$.
     Also, $\mathcal{A}_+(x^*,y^*) = I^+(x^*)$ since $\rho(x^*,y^*)=0$, and so $j\not\in \mathcal{A}_+(x^*,y^*)$. 
     Altogether, $j\notin\mathcal{A}_+(x,y)$ for all $(x,y)\in\mathcal{N}$. This shows that $\mathcal{A}_+(x,y)\subseteq I^+(x^*)$, which completes the proof.
    \end{proof}

\section{Experiments}\label{sec:exp}

In this section, we presentation numerical experiments which compare the different support measures developed in Section~\ref{sec:identification}. Given an algorithm for solving~\eqref{eq:VIP}, our approach is loosely described as follows. First, we run some number $k_0$ of iterations of the algorithm to solve~\eqref{eq:saddle}, after which we assume that $(x^{k_0},y^{k_0})$ is close enough to $(x^*,y^*)$ so that $I(x^*)$ can be reasonably approximated. At this point, we ``correct'' the support $I$ by taking a estimation $\Tilde{I}\approx I(x^*)$. We then restart the same algorithm, from $x^{k_0}$, and solve the reduced form of~\eqref{eq:saddle} where $I$ is replaced by $\Tilde{I}$. Note that due to the dimension reduction, the variable $y^{k_0}$ may no longer be feasible and must hence be reset. In practice, we would expect the algorithm to converge faster when applied to the simpler reduced problem, and this is what we will observe in this section.

Although results of Section~\ref{sec:identification} show that we can precisely identify $I(x^*)$ after some number $k_0$ of iterations, in practice we have no way of knowing whether $k_0$ is large enough. This may lead to false positives ($\Tilde{I}\setminus I(x^*)$) or false negatives ($I(x^*)\setminus\Tilde{I}$). If there are many false positives, then the purpose of the support correction is defeated since the new problem is not much simpler when compared to the original. On the other hand, even one false negative can mean the solution of the reduced model is different to that of the original. This is why we should perform this support correction process more than once, so that if false positives/negatives appear in one approximation, they are hopefully removed in the next. Two versions of this heuristic, deterministic and stochastic, are stated respectively in Algorithms~\ref{alg:det} and~\ref{alg:stoc}. There, we denote by $\mathcal{F}$ a single step of a numeric solver for~\eqref{eq:saddle}, that is, the recursive sequence $(x^{k+1},y^{k+1}) = \mathcal{F}(x^k,y^k)$ converges to a saddle point $(x^*,y^*)$.

\begin{algorithm2e}
    \SetKwInOut{Input}{Input}
    \Input{Solver $\mathcal{F}$, support measure $\overline{I}\colon\Hilbert\times\Delta^N\to 2^I$, sequence $(k_j)_{j\in\mathbb{N}}\subset\mathbb{N}$ of iteration counts}
    \caption{Deterministic--Support Correction Heuristic (D-SCH)}\label{alg:det}
    \textbf{Initialisation} Choose $x^0\in\Hilbert$, $y^0\in\Delta^N$ \;
    Define initial objective $\phi(x,y) = \sum_{i\in I}y_i f_i(x)$ and constraints $\Delta=\Delta^N$ \;
    \For{$j=0,1,\dots$}{
    \For{$k=1,\dots,k_j$}{$(x^k,y^k) = \mathcal{F}(x^{k-1},y^{k-1})$\;}
    Take support measurement $\Tilde{I}=\overline{I}(x^{k_j}, y^{k_j})$ \;
    Define new objective $\phi(x,y) = \sum_{i\in\Tilde{I}}y_i f_i(x)$ and constraints $\Delta = \Delta^{\lvert\Tilde{I}\rvert}$ \;
    Set $x^0:=x^{k_j}$, choose $y^0\in\Delta$ \;
    }
\end{algorithm2e}

\begin{algorithm2e}
    \SetKwInOut{Input}{Input}
    \Input{Solver $\mathcal{F}$, support measure $\overline{I}\colon\Hilbert\times\Delta^N\to 2^I$, $\delta\in(0,1)$}
    \textbf{Initialisation} Choose $x^0\in\Hilbert$, $y^0\in\Delta^N$, set $q=1$\;
    \caption{Stochastic--Support Correction Heuristic (S-SCH)}\label{alg:stoc}
    \For{$k=1,2,\dots$}{
    $(x^k,y^k) = \mathcal{F}(x^{k-1},y^{k-1})$\;
    \Prob{$1-q$}{
        Set $\Tilde{I}=\overline{I}(x^k, y^k)$ \;
        Take support measurement $\Tilde{I}=\overline{I}(x^{k_j}, y^{k_j})$ \;
        Define new objective $\phi(x,y) = \sum_{i\in\Tilde{I}}y_i f_i(x)$ and constraints $\Delta = \Delta^{\lvert\Tilde{I}\rvert}$ \;
        Set $x^0:=x^k$, choose $y^0\in\Delta$ \;
        Reset $q=1$ \;
    }{
    $q=\delta q$ \;
    }
    }
\end{algorithm2e}

In the following experiments, we apply both Algorithm~\ref{alg:det} and Algorithm~\ref{alg:stoc} with the following support measures
\begin{align*}
    I(x) &= \left\{i\in I\colon f_i(x) = f(x)\right\}\\
    I_{\oplus}(x,y) &= \left\{i\in I\colon f(x) - f_i(x) \leq y_i\right\}\\
    I_{\text{eps}}(x^k, y^k) &= \left\{i\in I\colon f(x^k)-f_i(x^k)\leq\sqrt{\varepsilon_k}\right\},
\end{align*}
where $\varepsilon_k=f(x^k)-\phi(x^k,y^k)$. Observe that $I$ above is the ``naive'' approach and not guaranteed to correctly estimate $I(x^*)$, unless $\dist(0,\partial f(x^k))\to 0$ as shown in Theorem~\ref{thm:subgradient}. Meanwhile, $I_{\oplus}$ and $I_{\text{eps}}$ are theoretically justified as shown in Theorem~\ref{thm:oplus} and Corollary~\ref{cor:eps-subdiff} respectively (with $p=2$ in Corollary~\ref{cor:eps-subdiff}). While these estimations have been shown to identify the support whenever the number of iterations is sufficiently large, our heuristic is not particularly useful if the sequence $(x^k,y^k)$ is already very close to the solution $(x^*,y^*)$. This is why we perform the support measurement step with a small positive tolerance $\sigma>0$. For instance, we can pad the right hand sides of the inequalities in $I_{\oplus}$ and $I_{\text{eps}}$, and replace the naive measurement by $I(x) = \{i\in I\colon f_i(x) \geq f(x) - \sigma\}$. Depending on the tolerance chosen, these measurements may include a small proportion of additional false positives, but they have the desired effect of being able to reduce the problem before convergence.

The base method for our heuristic is the adaptive Golden RAtio ALgorithm (aGRAAL) \cite{malitsky2020golden}: a method for solving~\eqref{eq:VIP} for locally Lipschitz $F$ without requiring a backtracking linesearch. We compare this against the subgradient method~\eqref{eq:subgrad}, with step-sizes defined by $\frac{\gamma_k}{\|g^k\|}$ for decaying $\gamma_k\in\mathbb{R}_{++}$, noting that only $I(x)$ is well defined here since the sequence $(y^k)\subset\Delta^N$ is not generated. In each case, we compare our heuristic against the base method, that is, without the heuristic attached.

Our experiments are run in Python 3.9.0, on a Machine with an Intel(R) Core(TM) i7-10510U CPU @ 1.80GHz processor and 8GB memory. We compare the base methods, subgradient and aGRAAL, against Algorithms~\ref{alg:det} and~\ref{alg:stoc}. After performing the support correction step at a specified number of $k_0$ iterations in Algorithm~\ref{alg:det}, we run the algorithms further to examine the accuracy after each distinct support correction.

\subsection{Piecewise Linear}
Our first example is a piecewise linear problem.
\begin{equation}\label{eq:linear}
    \min_{x\in\mathbb{R}^n}\max_{i\in I}\langle\alpha_i,x\rangle+\beta_i.
\end{equation}

This is a simple problem which we are using as a proof of concept, that is, to confirm that our heuristic performs as expected, although problems of this form can be applied in practice (see, for instance, \cite[UOM1/UOM2]{ZAMIR2015947}). This is also a useful problem to study since we can generate the actual solution $x^*\in\argmin f$ via linear programming. This will be applied to determine the accuracy of our support measurements, and the accuracy of the generated sequences in terms of decay of the objective function $f(x^k) - f(x^*)$. In the above equation~\eqref{eq:linear} $\alpha_i\in\mathbb{R}^n,\beta_i\in\mathbb{R}$ are randomly generated from a standard multivariate normal distribution. 

\begin{figure}[h!]
    \centering
    \includegraphics[width=.8\textwidth]{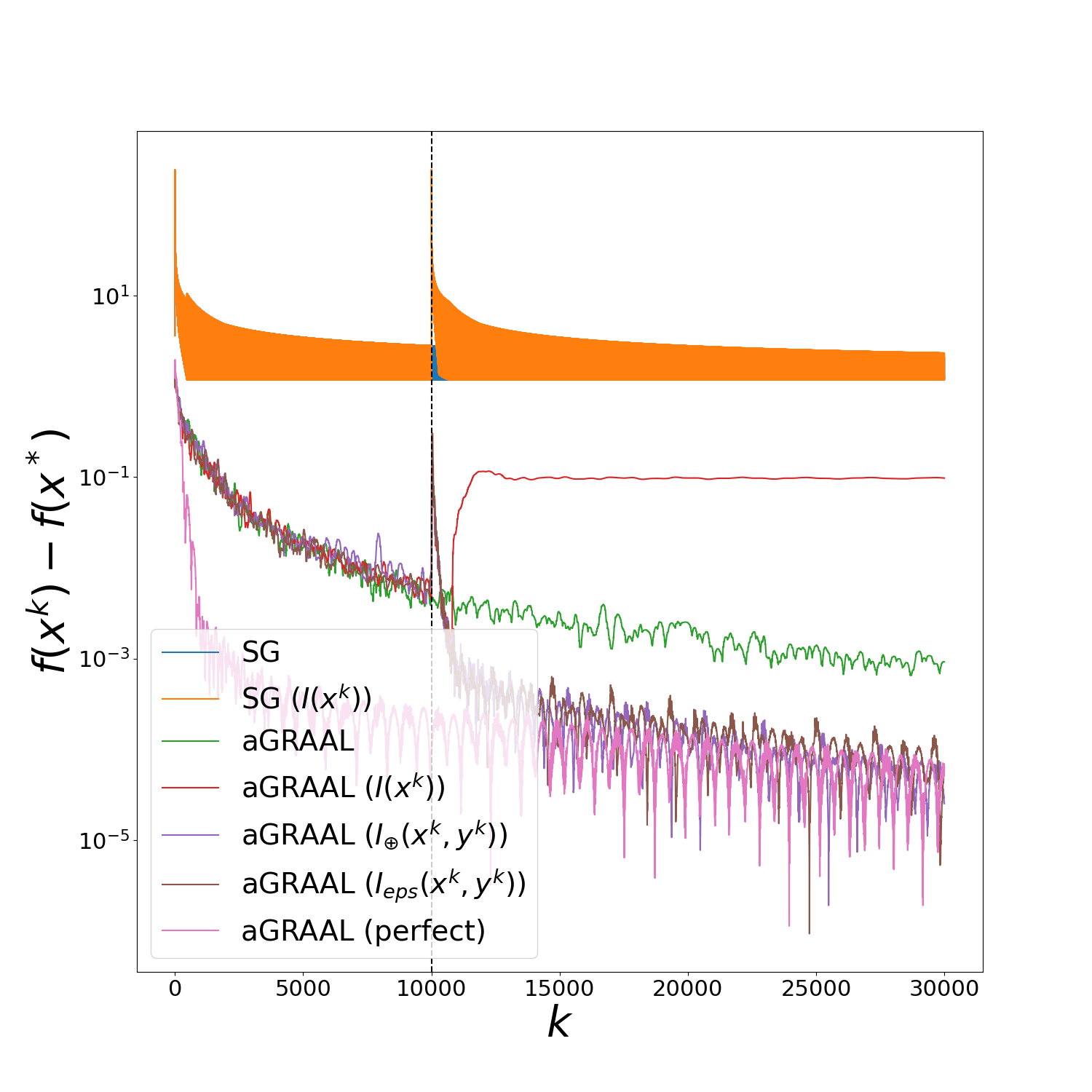}
    \caption{Objective function decay over iterations, for Algorithm~\ref{alg:det} applied to the piecewise linear problem~\eqref{eq:linear}.}
    \label{fig:linear-obj}
\end{figure}
\begin{figure}[h!]
    \centering
    \includegraphics[width=\textwidth]{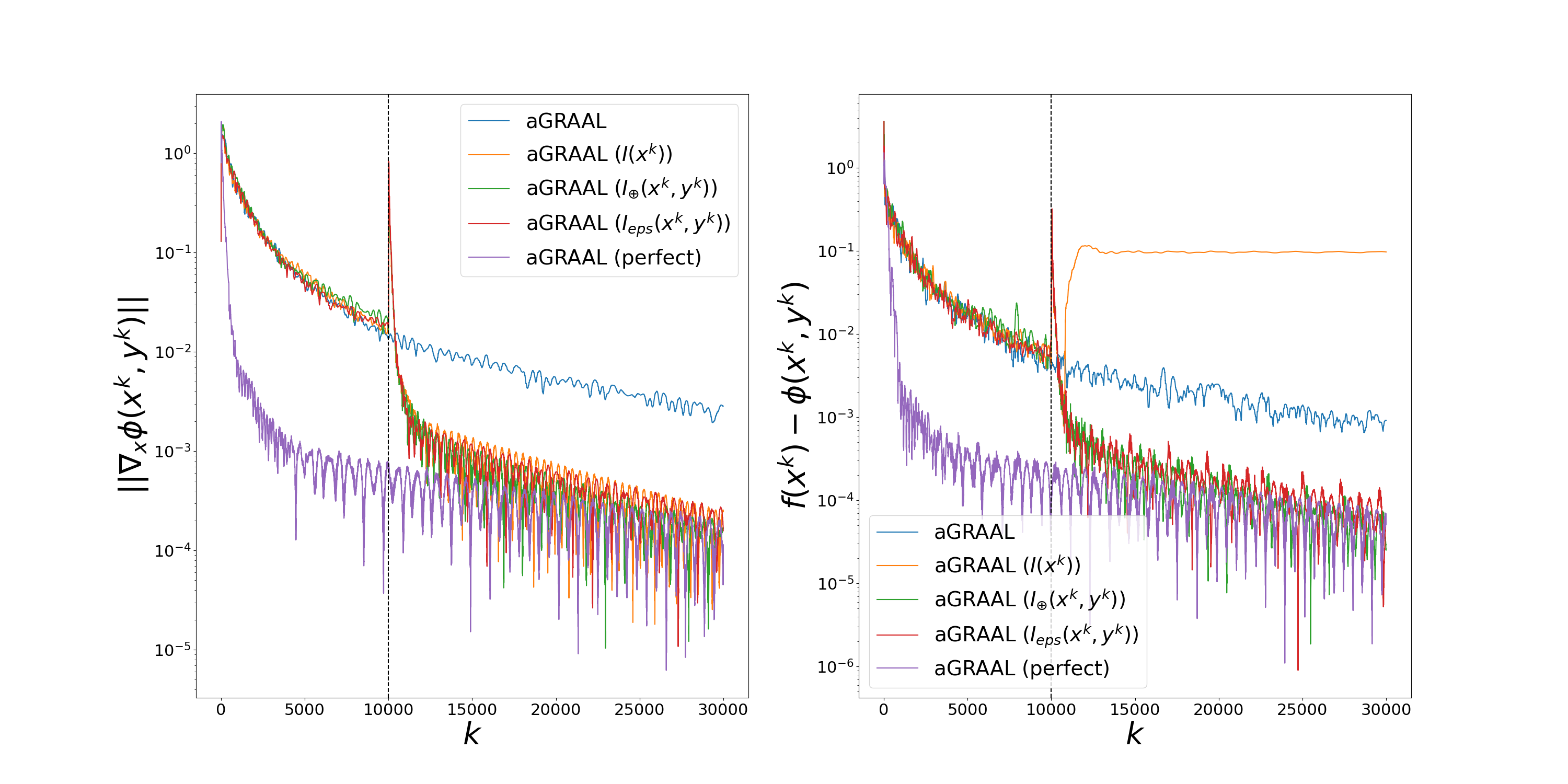}
    \caption{Error terms as derived in Theorem~\ref{thm:reform} for Algorithm~\ref{alg:det} over iterations, applied to the piecewise linear problem~\eqref{eq:linear}.}
    \label{fig:linear-err}
\end{figure}

Our results are displayed in Figures~\ref{fig:linear-obj} and~\ref{fig:linear-err}, with $N=2200,n=45$. We ran both the subgradient methods and aGRAAL for $k_0=10000$ initial iterations, and reduced by taking one of several measurements as described above. Figure~\ref{fig:linear-obj} shows the decay in objective function for both aGRAAL and~\eqref{eq:subgrad}, while Figure~\ref{fig:linear-err} shows the accuracy measurements derived for $(x^k,y^k)$ in Theorem~\ref{thm:reform}, which measure the distance to a saddle point $(x^*,y^*)$.

We first note that the subgradient method becomes stuck very early, in terms of the accuracy in the objective function. This is likely due to the sequence $\gamma_k$ decaying too quickly in the step-size $\lambda_k=\frac{\gamma_k}{\|g^k\|}$. As a result, the measurement $I(x^{k_0})$ is inaccurate. The plots also show a significant improvement in accuracy for aGRAAL after the support is measured. From both Figures~\ref{fig:linear-obj} and~\ref{fig:linear-err}, we can observe that after measuring $I(x^{k_0})$ the sequence converges to the wrong point, and indeed, we report that $I(x^{k_0})$ contained 21 false negatives, even with tolerance of $\sigma=10^{-2}$. Meanwhile, $I_{\oplus}(x^{k_0},y^{k_0})$, also with $\sigma=10^{-2}$, returned a measurement with only 3 false negatives and one false positive. As we can see from both figures, this was sufficiently close to the correct measurement thus resulting in the observed speedup and convergence. $I_{\text{eps}}$ performed even more desirably, which gave 10 false positives but no false negatives, and with a tolerance $\sigma=0$. 

Since in this case we have knowledge of the true index set $I(x^*)$,  we also applied aGRAAL to the reduced problem initially to examine the cost of perfect information. The variant of aGRAAL with perfect information immediately speeds up compared to the raw version, converging to a level of $10^{-3}$ in under 5,000 iterations. At the support correction step at $k=10,000$ iterations, there is a brief spike in accuracy, likely due to the required resetting of $y^k$ to become feasible in the reduced problem. Shortly after this step, the variants with supports measured by $I_{\oplus}$ and $I_{\text{eps}}$ converge to the same accuracy as that of the variant with perfect knowledge. However, the measurement $I(x^k)$ contains many false negatives even for $x^k$ generated by aGRAAL, and as a result the sequence is converging to the wrong point in this version of the heuristic.

Our next experiment shows the accuracy of each support measurement over many random problem instances of different sizes. For each instance, we can determine the actual support $I(x^*)$ via linear programming. The following table shows the accuracy of each support measure in general, in terms of false positives/false negatives. For each instance, we compared all measures at a low (5,000) and high (30,000) number of iterations of aGRAAL. Our comparisons include $I,I_{\oplus},I_{\text{eps}}$, as well as
\begin{align*}
    \mathcal{A}(x,y) &:= \{i\in I\colon f(x) - f_i(x)\leq\rho(x,y)\}\\
    \mathcal{A}_+(x,y) &:=\{i\in I\colon f(x) - f_i(x)\leq\rho(x,y)\leq y_i\},
\end{align*}

with the identification functions
\begin{align*}
\rho_1(x,y) &= \left(\left\|\sum_{i\in I} y_i\nabla f_i(x)\right\|_1 + f(x) - \phi(x,y)\right)^\gamma\\
\rho_2(x,y; \lambda) &:= \|P_{\Hilbert\times\Delta^N}((x,y) - \lambda F(x,y))\|^\gamma
\end{align*}
for $\gamma=0.8$. Note that $\rho_1$ and $\rho_3$ in Example~\ref{ex:id funcs} are in fact equal if the chosen norm is $\|\cdot\|_1$.
\begin{table}[h!]
    \centering
    \begin{tabular}{c|c|c|c|c|c|c|c|c|c}
    $N$ & $n$ & $k$ & $I$ & $I_{\oplus}$ & $I_{\text{eps}}$ & $\mathcal{A}~(\rho_1)$ & $\mathcal{A}_+~(\rho_1)$ & $\mathcal{A}~(\rho_2)$ & $\mathcal{A}_+~(\rho_2)$ \\\midrule
    500 & 5 & 5000    & 0/5  & 0/3  & 0/0  & 0/0   & 0/0   & 0/0  & 0/4  \\
    500 & 5 & 30000   & 0/0  & 0/0  & 0/0  & 0/0   & 0/0   & 0/0  & 0/1  \\
    1000 & 5 & 5000   & 0/5  & 0/4  & 3/0  & 1/0   & 1/0   & 0/0  & 0/5  \\
    1000 & 5 & 30000  & 0/3  & 0/0  & 0/0  & 0/0   & 0/0   & 0/0  & 0/2  \\
    1500 & 5 & 5000   & 0/5  & 0/0  & 8/0  & 9/0   & 9/0   & 4/0  & 0/5  \\
    1500 & 5 & 30000  & 0/2  & 0/0  & 1/0  & 1/0   & 1/0   & 1/0  & 0/1  \\
    2000 & 5 & 5000   & 0/5  & 0/1  & 5/0  & 9/0   & 9/0   & 6/0  & 0/4  \\
    2000 & 5 & 30000  & 0/1  & 0/0  & 3/0  & 2/0   & 2/0   & 1/0  & 0/1  \\
    2500 & 10 & 5000  & 1/9  & 1/3  & 7/0  & 31/0  & 31/0  & 12/0 & 0/10 \\
    2500 & 10 & 30000 & 0/0  & 0/0  & 3/0  & 2/0   & 2/0   & 1/0  & 0/3  \\
    3000 & 10 & 5000  & 0/9  & 1/4  & 6/0  & 11/0  & 11/0  & 4/0  & 0/8  \\
    3000 & 10 & 30000 & 0/4  & 0/1  & 3/0  & 0/0   & 0/0   & 0/0  & 0/6  \\
    3500 & 20 & 5000  & 0/18 & 0/9  & 4/0  & 52/0  & 52/0  & 5/0  & 0/21 \\
    3500 & 20 & 30000 & 0/5  & 0/0  & 2/0  & 5/0   & 5/0   & 2/0  & 0/16 \\
    4000 & 20 & 5000  & 0/20 & 0/15 & 9/0  & 69/0  & 69/0  & 9/0  & 0/21 \\
    4000 & 20 & 30000 & 1/8  & 1/0  & 1/0  & 3/0   & 3/0   & 1/0  & 0/14 \\
    4500 & 50 & 5000  & 1/50 & 1/32 & 13/1 & 126/0 & 126/0 & 13/1 & 0/51 \\
    4500 & 50 & 30000 & 2/0  & 2/0  & 6/0  & 11/0  & 11/0  & 4/0  & 0/34 \\
    5000 & 50 & 5000  & 0/46 & 1/30 & 18/0 & 175/0 & 175/0 & 29/0 & 0/51 \\
    5000 & 50 & 30000 & 1/1  & 1/1  & 5/0  & 14/0  & 14/0  & 2/0  & 0/40
    \end{tabular}
    \caption{Accuracy of each support measurement in terms of false positives/negatives.}
    \label{tab:test}
\end{table}
From Table~\ref{tab:test} we can see that $I_{\text{eps}}$ is clearly the most accurate out of all measurements. Both $I$ and $I_{\oplus}$ have a very low false positive count, but the high false negative count is undesirable since the solutions of the resulting problem would not match those of the original. Meanwhile, the accuracy of the sets $\mathcal{A}$ and $\mathcal{A}_+$ depend heavily on the choice of identification function, but generally have a high false positive or negative count. Out of these, the best performing was $\mathcal{A}$ with $\rho_2$, with a low-medium level of false positives and no false negatives, except for one in the case of $N=4500,n=50,k=5000$. Note that this is the low iterations mark. Meanwhile, the set $I_{\text{eps}}$ has a very low count of false positives, and no false negatives except for one in the aforementioned instance.

\subsection{Piecewise Quadratic}

\begin{figure}[h!]
    \centering
    \includegraphics[width=\textwidth]{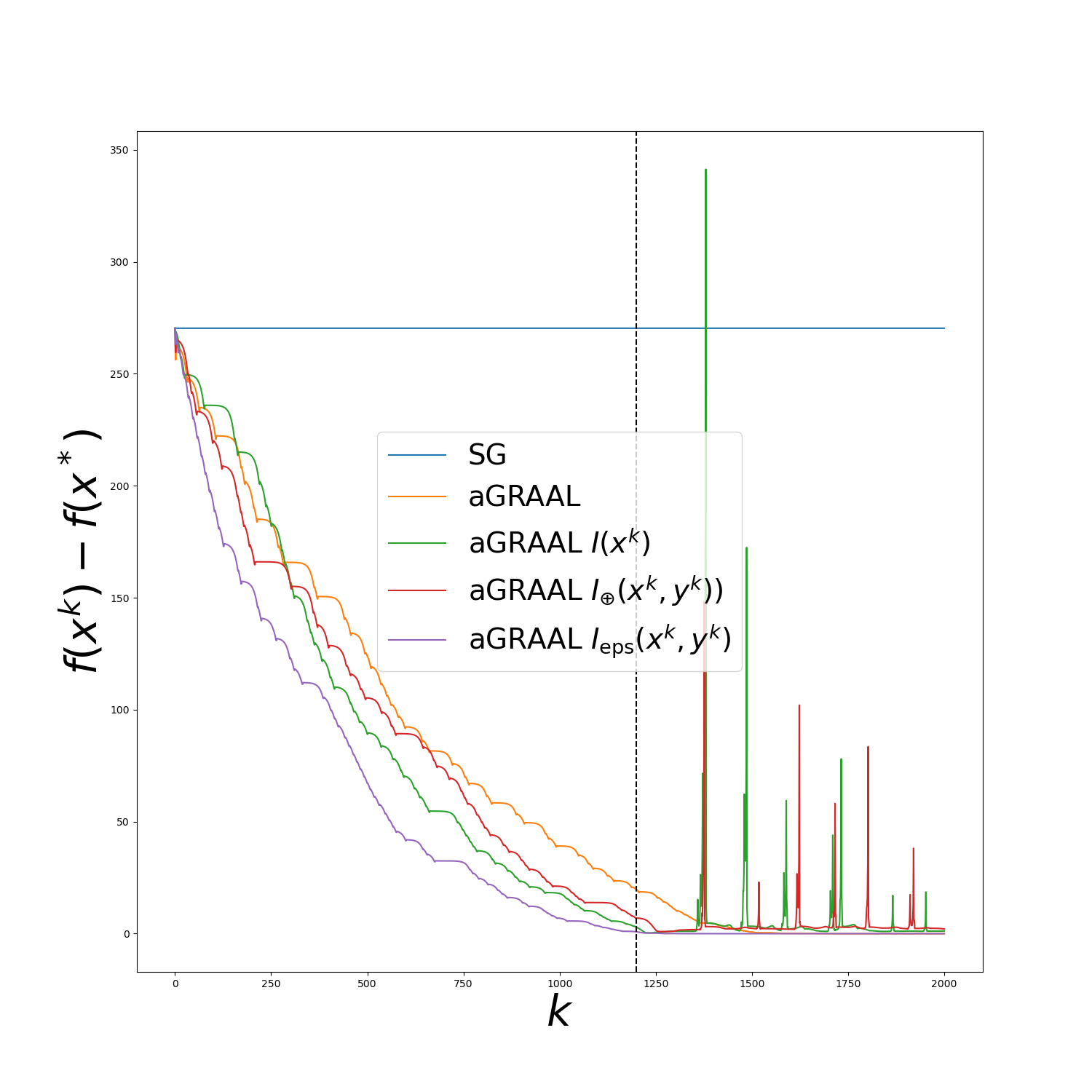}
    \caption{Objective function decay over iterations, for Algorithm~\ref{alg:det} applied to the piecewise quadratic problem~\eqref{eq:quad}.}
    \label{fig:quad-det-obj}
\end{figure}
\begin{figure}[h!]
    \centering
    \includegraphics[width=\textwidth]{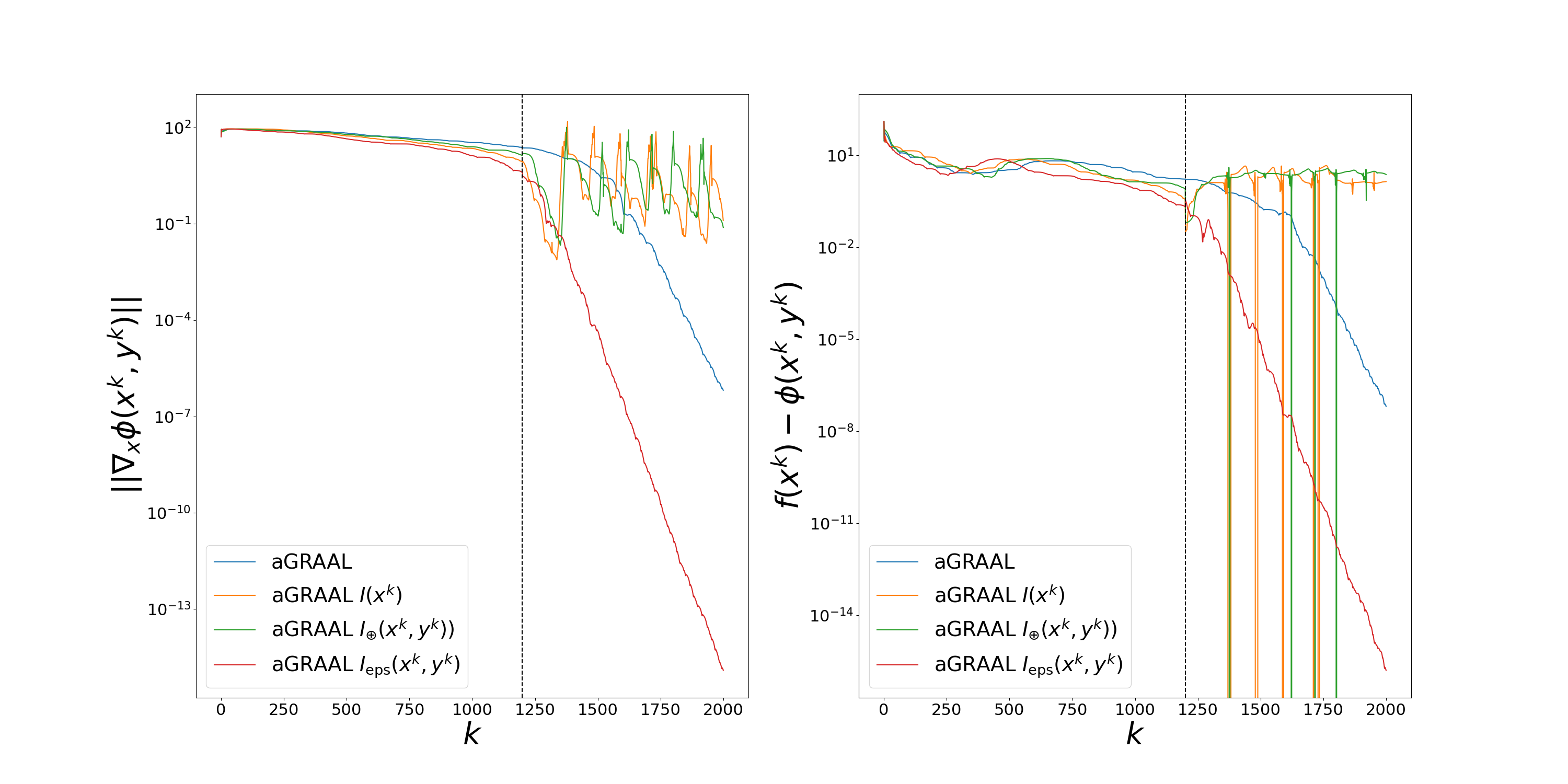}
    \caption{Error terms as derived in Theorem~\ref{thm:reform} over iterations, for Algorithm~\ref{alg:det} applied to the piecewise quadratic problem~\eqref{eq:quad}.}
    \label{fig:quad-det-err}
\end{figure}
\begin{figure}[h!]
    \centering
    \includegraphics[width=\textwidth]{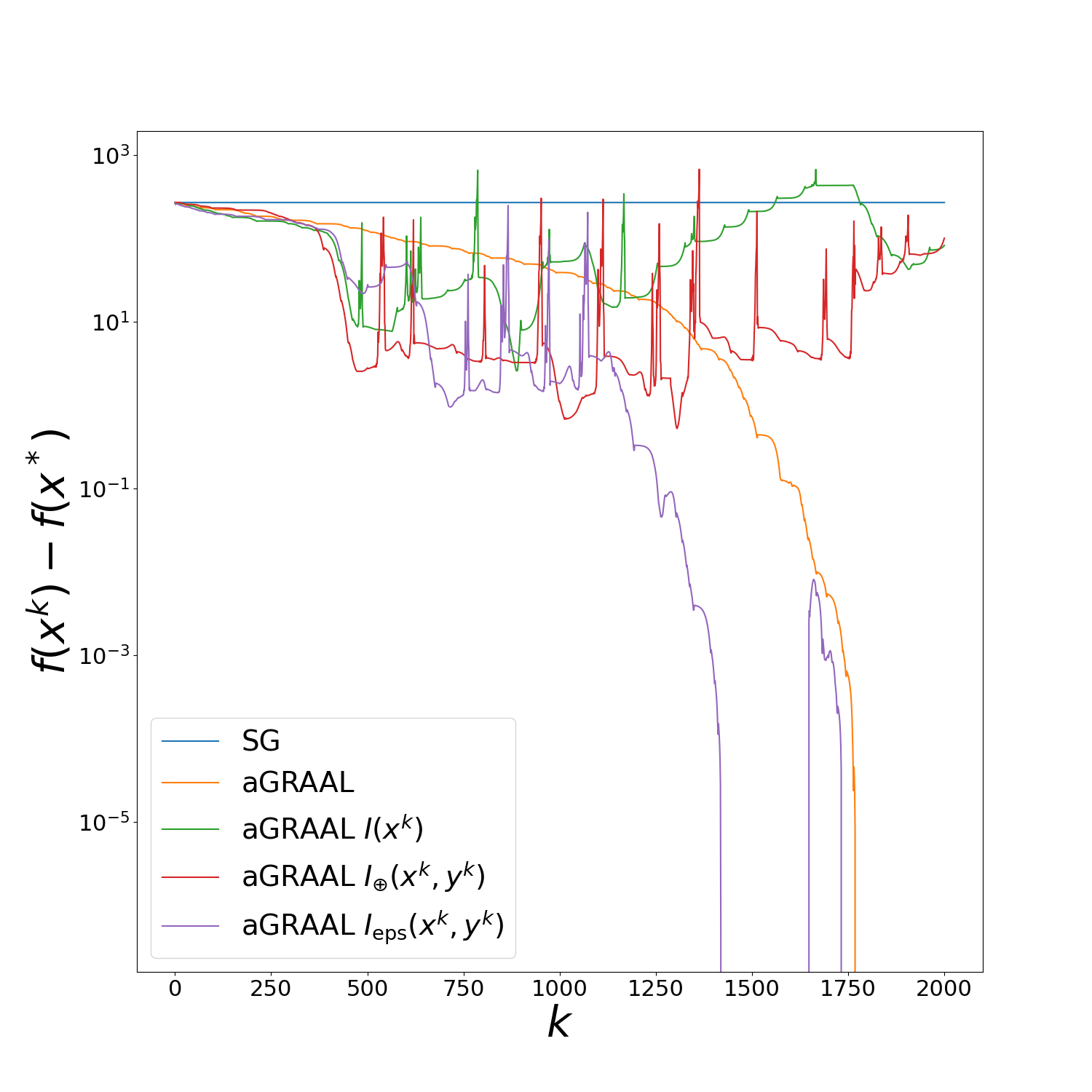}
    \caption{Objective function decay over iterations, for Algorithm~\ref{alg:stoc} applied to the piecewise quadratic problem~\eqref{eq:quad}.}
    \label{fig:quad-stoc-obj}
\end{figure}
\begin{figure}[h!]
    \centering
    \includegraphics[width=\textwidth]{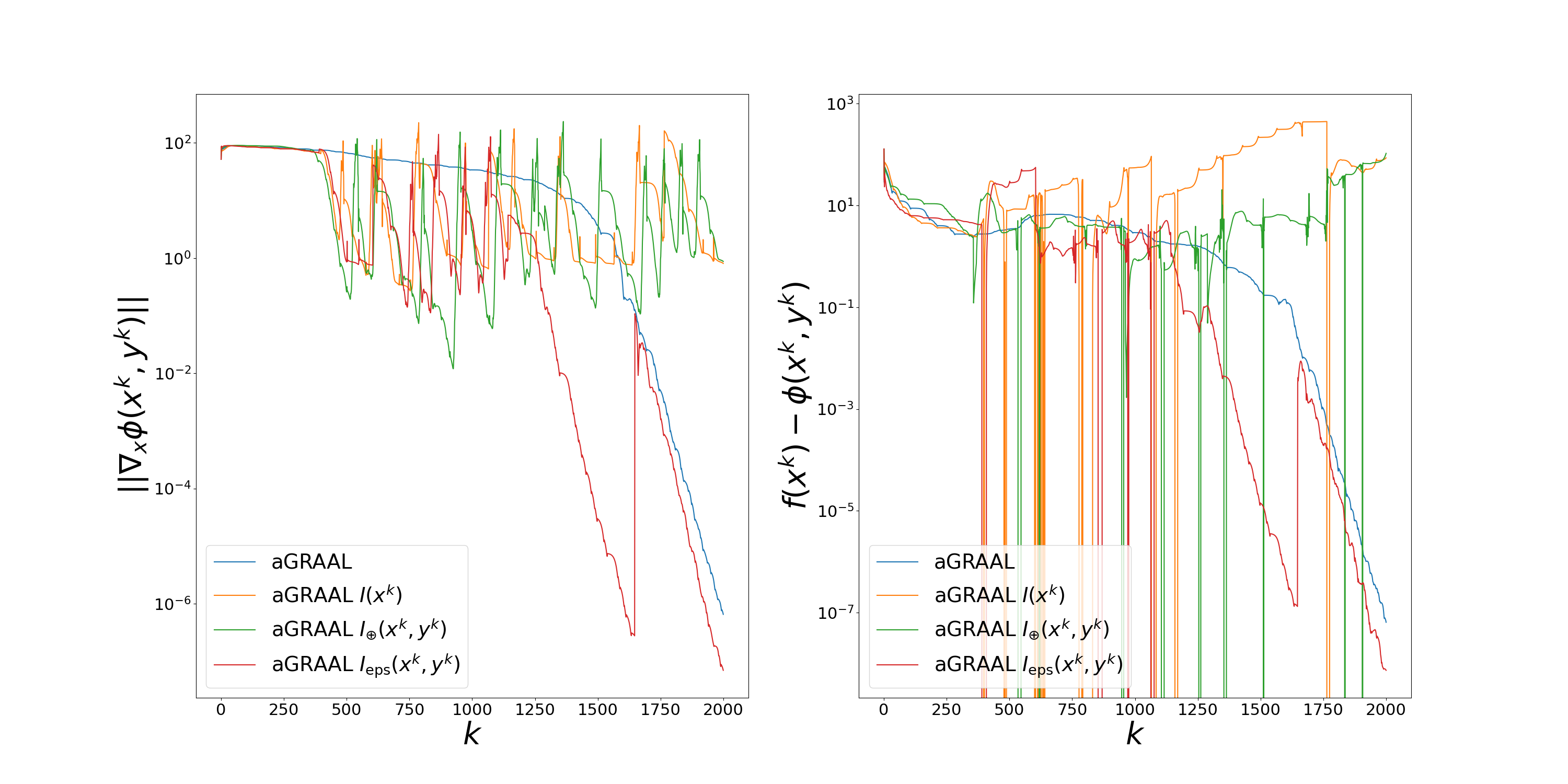}
    \caption{Error terms as derived in~Theorem~\ref{thm:reform} over iterations, for Algorithm~\ref{alg:stoc} applied to the piecewise quadratic problem~\eqref{eq:quad}.}
    \label{fig:quad-stoc-err}
\end{figure}

Our next example is the piecewise quadratic problem
    \begin{equation}\label{eq:quad}
        \min_{x\in\mathbb{R}^n}\max_{i\in I} x^T H_i x + q_i^T x.
    \end{equation}

Similarly to the previous example, this is a generic problem that is also intended as a proof of concept. In this case we have $N=600,n=30$, $H_i\in\mathbb{S}_+^n$ is generated for all $i$ by normally generating $A_i\in\mathbb{R}^{n\times n}$ and setting $H_i= A^T A$, and $q_i$ is generated uniformly in $[-1,1]^n$. In this section we take all support measurements with a tolerance of $\epsilon=10^{-1}$.

We generated our initial point $x^0$ as the mean of the minimisers of $f_i$, which are each computed as the solution to the linear system $2H_i = -q_i$. Unlike in the previous example, only the set $I_{\text{eps}}(x^{k_0},y^{k_0})$ accurately estimated the active support $I(x^*)$. This set contained a large proportion (417, out of the original index set of 600) of subfunctions, and it is likely that many of these are false positives, however there is still some simplification that is achieved as can be seen in Figures~\ref{fig:quad-det-obj} and~\ref{fig:quad-det-err}. Meanwhile, $I(x^{k_0})$ and $I_{\oplus}(x^{k_0},y^{k_0})$ are too small, and did not accurately coincide with $I(x^*)$. So the resulting sequence from these measurements both converge to the wrong point. Similar speedup is observed for the stochastic versions in Figures~\ref{fig:quad-stoc-obj} and~\ref{fig:quad-stoc-err}, where there are slight differences only due to the random nature of the heuristic --- both in terms of the random number of iterations between corrections, and the randomness in the initialisation we apply for the underlying aGRAAL. Here we can also observe the effect of the support correction step: when we measure $\Tilde{I}$ we must reset the sequence $y^k$ since the dimension is reduced, which results in the small ``jumps'' we can see in each Figure, as the sequence pair $(x^k,y^k)$ becomes momentarily displaced. In the stochastic versions, as in the determinant case, only $I_{\text{eps}}$ was able to measure $I(x^*)$ to a sufficent precision, and thus produce a sequence that converges to the correct point.

\subsection{Generalised Spanning Circle}

\begin{figure}[h!]
    \centering
    \includegraphics[width=\textwidth]{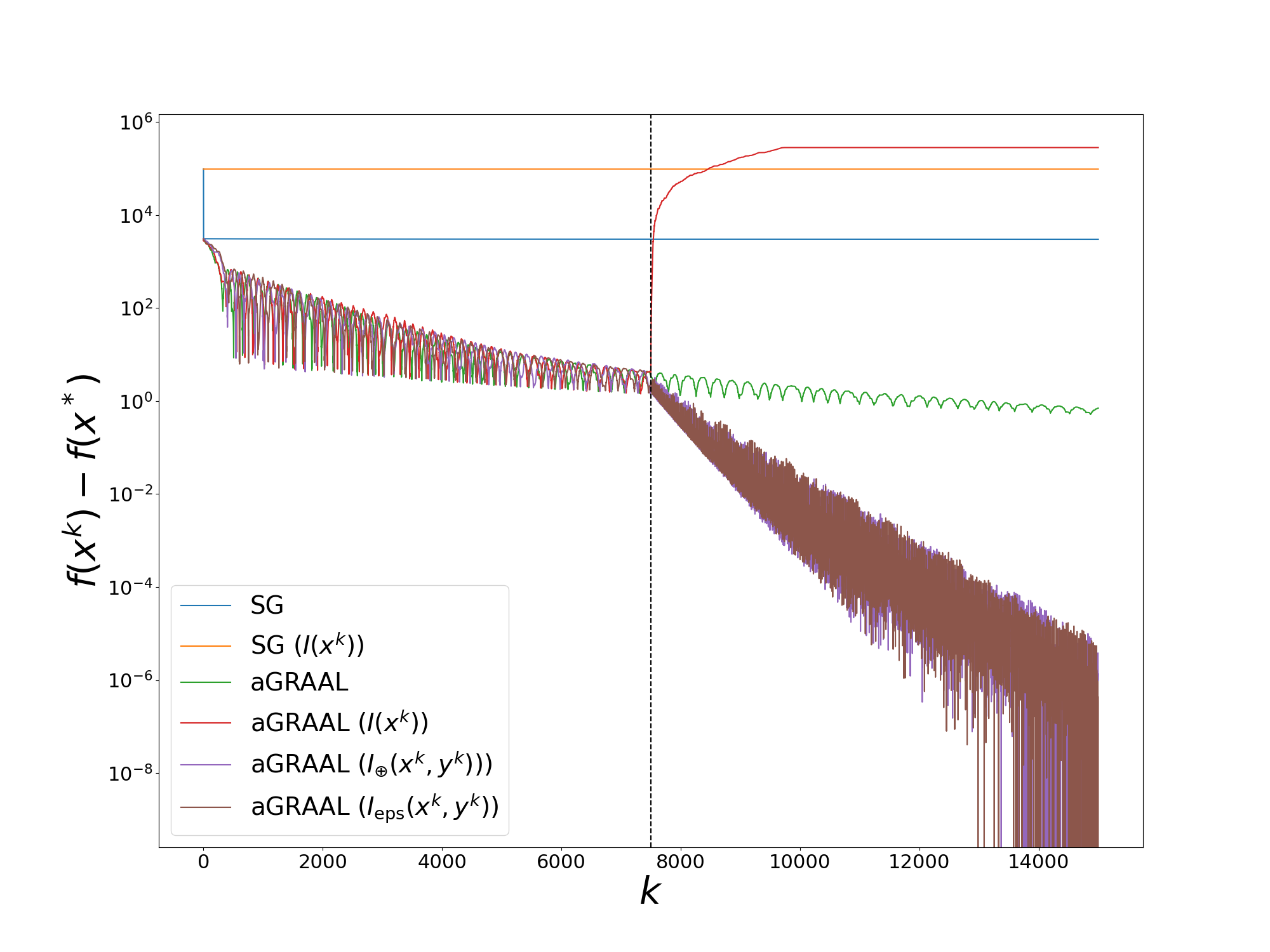}
    \caption{Objective function decay over iterations, for Algorithm~\ref{alg:det} applied to the generalised spanning circle problem~\eqref{eq:weighted circ}.}
    \label{fig:circ-det-obj}
\end{figure}
\begin{figure}[h!]
    \centering
    \includegraphics[width=\textwidth]{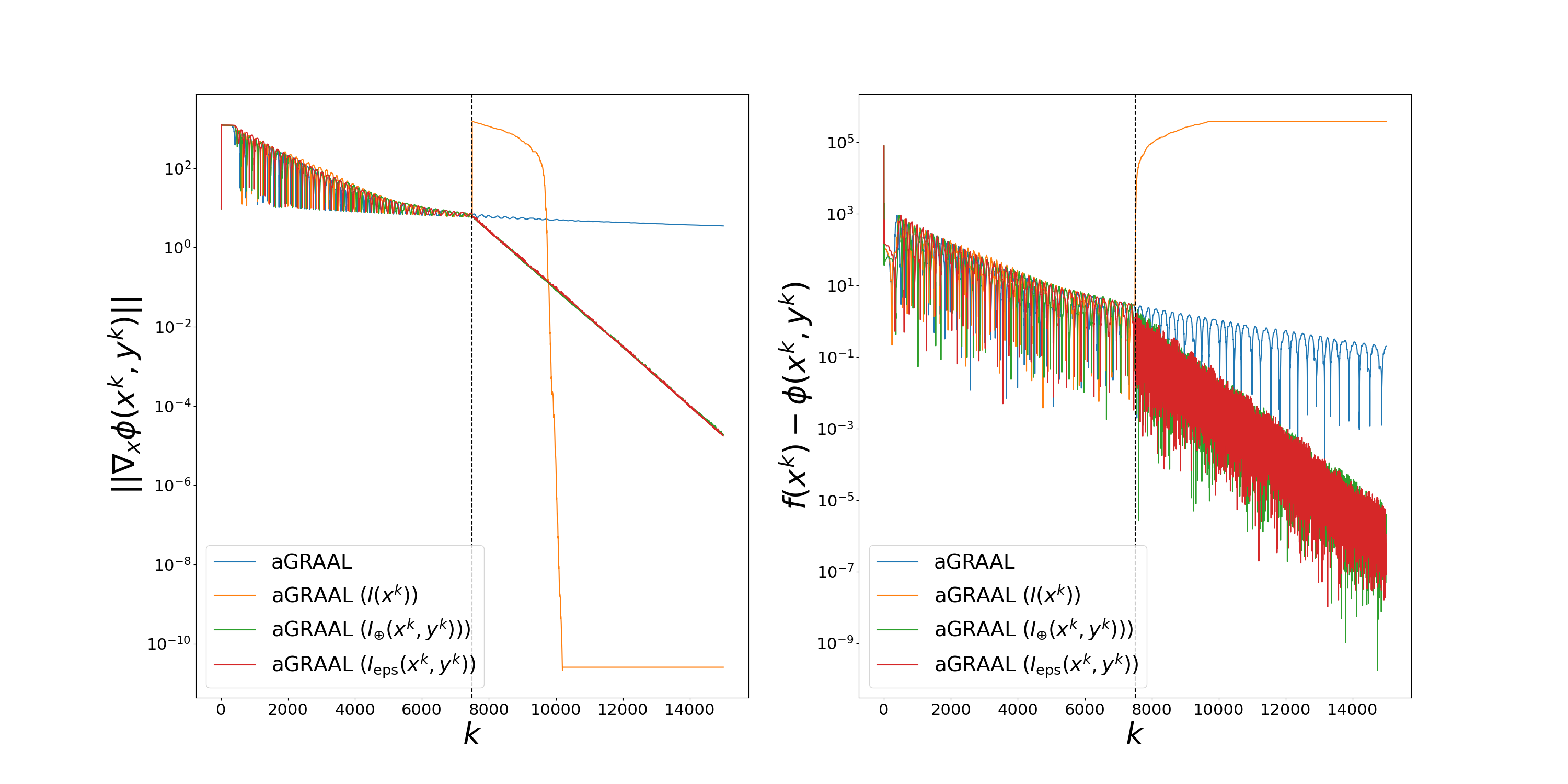}
    \caption{Error terms as derived in~Theorem~\ref{thm:reform} over iterations, for Algorithm~\ref{alg:det} applied to the generalised spanning circle problem~\eqref{eq:weighted circ}.}
    \label{fig:circ-det-err}
\end{figure}
\begin{figure}[h!]
    \centering
    \includegraphics[width=\textwidth]{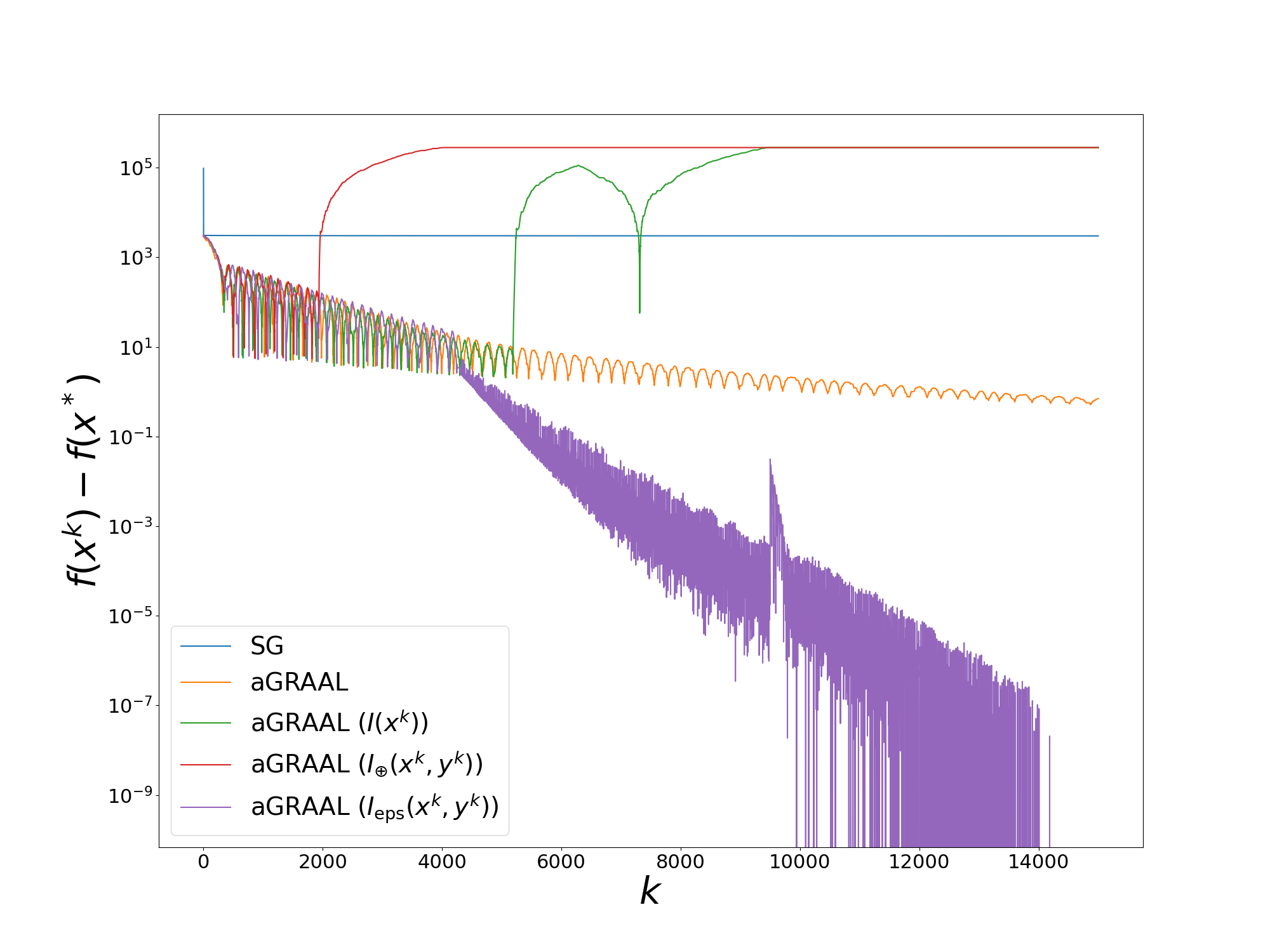}
    \caption{Objective function decay over iterations, for Algorithm~\ref{alg:det} applied to the generalised spanning circle problem~\eqref{eq:weighted circ}.}
    \label{fig:circ-stoc-obj}
\end{figure}
\begin{figure}[h!]
    \centering
    \includegraphics[width=\textwidth]{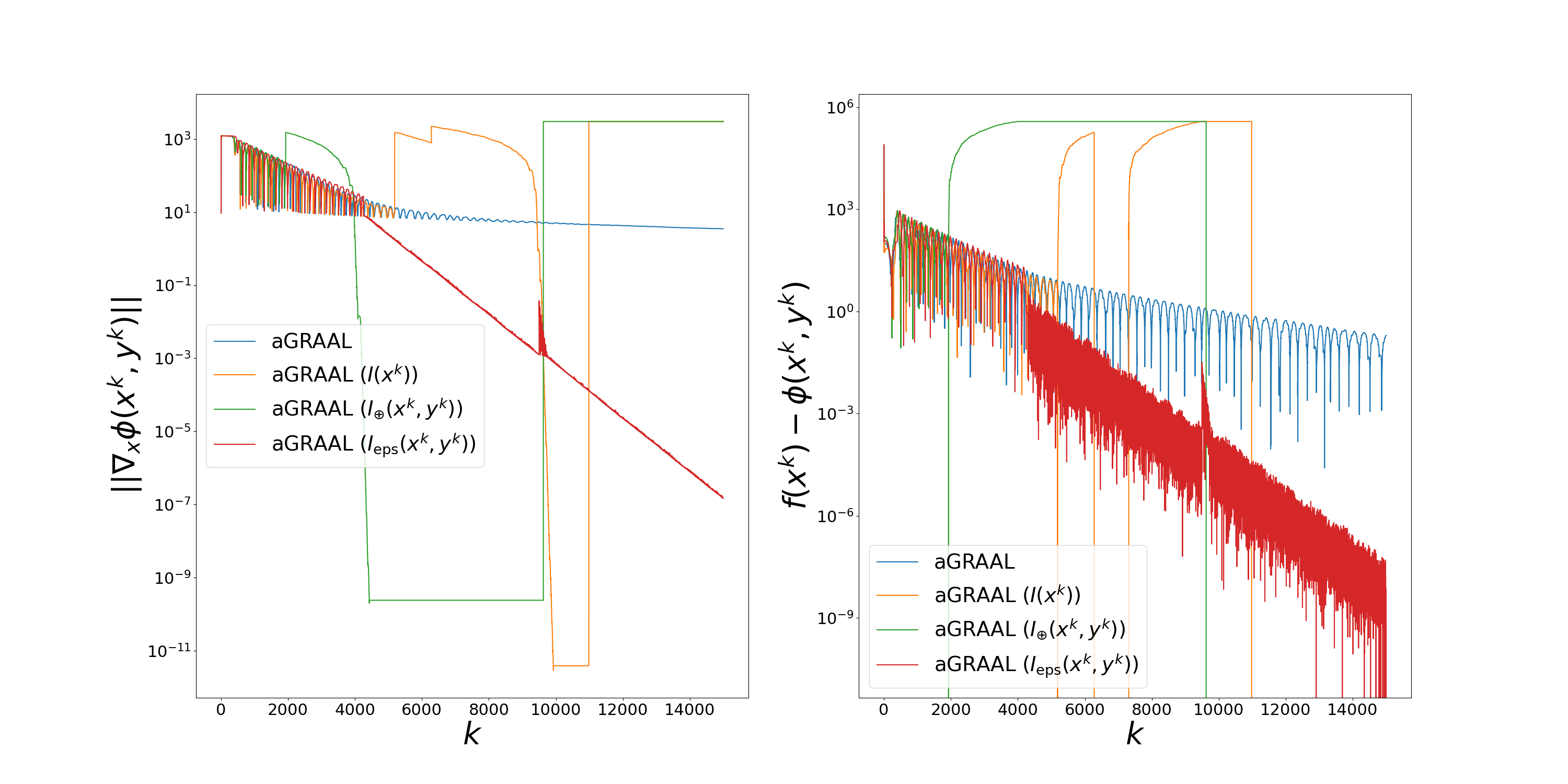}
    \caption{Error terms as derived in~Theorem~\ref{thm:reform} over iterations, for Algorithm~\ref{alg:stoc} applied to the generalised spanning circle problem~\eqref{eq:weighted circ}.}
    \label{fig:circ-stoc-err}
\end{figure}

Our final example is a generalisation of the spanning circle problem, which is to cover a given point set by a circle of minimal radius. By introducing weights $\omega\in\mathbb{R}^N$ and penalties $\kappa\in\mathbb{R}^N_{++}$, we study a more general facility location problem that is a combination of the problems studied in \cite[Case (b)]{elzinga1972geometrical} and \cite{hearn1982efficient}, namely the following
\begin{equation}\label{eq:weighted circ}
    \min_{x\in\mathbb{R}^n}\max_{i\in I}\omega_i\|x - p_i\|^2 + \kappa_i.
\end{equation}

In the standard case where all weights and penalties are equal, the problem can be interpreted simply as deciding a location for a central facility between customers $p_1,\dots,p_N\in\mathbb{R}^n$, so as to minimise the maximum of travel distances between all customers and the facility. The inclusion of weights into the problem models priorities toward certain customers, and penalties are some fixed travel cost in addition to the distance $\|x-p_i\|^2$. In our setting, we generated a random dataset of $3500$ points uniformly in $[-100,100]^2$, and clustered this dataset by applying the OPTICS algorithm from the sklearn package \cite{scikit-learn}, resulting in $N=1014$ clusters. From here, we set $p_i$ as the mean of the $i$'th cluster. We assigned $\omega_i$ as the population of the $i$'th cluster, so that more populous clusters are given higher priority. Finally, $\kappa_i$ is calculated in terms of the intracluster distance, and represents a fixed travel distance in addition to travel to/from the cluster.

In the deterministic setting shown in Figures~\ref{fig:circ-det-obj} and~\ref{fig:circ-det-err}, the measurements $I_{\text{eps}}$ and $I_{\oplus}$ provide a similar speedup of convergence as was observed in the examples of the previous sections. However, $I(x^{k_0})$ has once again failed to accurately measure $I(x^*)$, and the effect of this is that the sequence visibly converges to the wrong point after this correction step. Similarly for the stochastic versions shown in Figures~\ref{fig:circ-stoc-obj} and~\ref{fig:circ-stoc-err}, we observe that Algorithm~\ref{alg:stoc} performs well only by applying $I_{\text{eps}}$, which we assume is the result of $I_{\text{eps}}$ having higher accuracy in measuring $I(x^*)$ compared to $I$ and $I_{\oplus}$, although this could also be due to the latter measurements being taken too early.

\section{Conclusion}

In this paper, we presented a smooth min-max model~\eqref{eq:saddle} for solving the nonsmooth finite max problem~\eqref{eq:minmax}. We first justified the smooth model by proving its equivalence to the original problem, and described how we could solve it within the framework of variational inequalities. Then we presented several approaches for identifying the active function support $I(x^*)$ within finitely many iterations of any method. We then demonstrated how the concept of support identification can be applied to speed up convergence in a numerical setting. In our experiments, we compared the accuracy of the many support measurements given in Section~\ref{sec:identification}, and observed that our chosen algorithm converges faster when applied to the resulting simplification.

We conclude by presenting some directions for further research.

\begin{itemize}
    \item \textbf{Assumption Relaxation.} In practice, many optimisation problems are non-convex. However, many of the results derived here depend on convexity through Lemma~\ref{lem:subdiff}. Therefore, it is unclear whether something similar can be achieved, for example, with weak convexity instead of convexity.
    \item \textbf{Alternative Methods.} In this paper, we worked within the framework of the saddle point model~\eqref{eq:saddle} and the convergence results that can be derived there, but this is far from the only approach for solving~\eqref{eq:minmax}. One interesting case could be if the projections onto the epigraphs $\text{epi}_{f_i} = \{(x,t)\in\Hilbert\times\mathbb{R}\colon f_i(x)\leq t\}$ are available, then algorithms such as PDHG \cite{chambolle2016introduction, condat2023randprox, cohen2020optimal} could be applied.
    \item \textbf{Stochastic Solvers.} If $N$ is large, then~\eqref{eq:saddle} could also benefit from a stochastic method for solving~\eqref{eq:VIP}. However, in this case, the convergence results are slightly different from the deterministic setting (\emph{eg}, $(x^k,y^k)\to(x^*,y^*)$ almost surely).
    \item \textbf{Generalised Problems.} It would be useful to consider an extension~\eqref{eq:minmax} to include a proper, convex, and lsc function $g\colon\Hilbert\to(-\infty,+\infty]$, that is, the problem given by
    $$\min_{x\in\Hilbert}f(x) + g(x).$$
    This problem is more difficult to analyse since, for instance, we would no longer have $\dist(0,\partial f(x^k))\to 0$ in Theorem~\ref{thm:subgradient}. One other useful generalisation could be to take the \textit{sum} of finite max functions $f(x) = \sum_{j=1}^n\max\{f_{1,j}(x),\dots,f_{r,j}(x)\}$ (eg, the $\ell_1$-norm $\|x\|_1 = \sum_{j=1}^n \max\{-x,x\}$). Technically, these functions could be written in the form of~\eqref{eq:minmax}, but would require $r^n$ subfunctions. These extensions would enable us to study, for instance,
    \begin{align*}
        \text{(Support Vector Machines)}\quad & \min_{(w,w_0)\in\mathbb{R}^m\times\mathbb{R}} \|w\|^2_2 + C\sum_{j=1}^n \max\{0,1-y_i(\langle w,x_i\rangle - w_0)\}\\
        \text{(Basis Pursuit)}\quad & \min_{x\in\mathbb{R}^n}  \|x\|_1\quad\text{s.t.~} Ax=b.
    \end{align*}
\end{itemize}

\paragraph{Acknowledgements.}
MKT is supported in part by Australian Research Council grants DE200100063 and DP230101749. DJU is supported in part by Australian Research Council grant DE200100063.


\end{document}